\documentclass[a4paper,12pt]{article}

\usepackage{geometry}
\usepackage{indentfirst}

\usepackage{amsmath, amssymb, amsfonts}
\usepackage{mathrsfs}
\usepackage{bm}
\usepackage{bbm}

\usepackage{amsthm}
\theoremstyle{definition}

\newtheorem{The}{Theorem}
\newtheorem{Lem}[The]{Lemma}
\newtheorem{Cor}[The]{Corollary}
\newtheorem{Pro}[The]{Proposition}

\newtheorem{Cla}{Claim}

\newtheorem{Not}[The]{Note}

\usepackage{booktabs}
\usepackage{longtable}
\usepackage{multirow}
\usepackage{tabularx}
\usepackage{threeparttable}
\usepackage{float}

\usepackage{graphicx}
\usepackage{rotating}
\usepackage{subcaption}
\usepackage{caption}
\usepackage{tikz}

\usepackage[noend]{algpseudocode}
\usepackage{algorithm}

\usepackage[numbers,sort&compress]{natbib}

\usepackage{enumerate}
\usepackage{multicol}
\usepackage{pifont}
\numberwithin{equation}{section}
\usepackage[colorlinks,linkcolor=blue,citecolor=blue,urlcolor=blue]{hyperref}
\usepackage[normalem]{ulem}

\usepackage{graphicx}

\begin{document}
		\title{ Bricks in which  every vertex is incident with a forcing edge
		}
	\author{
		Xinyu Dai$^{1}$\quad
		Fuliang Lu$^{1,2}$\quad
		Yaxian Zhang$^{3}$
	\thanks{Corresponding author.}
	}

	\date{
		{\small
			$^{1}$School of Mathematics and Statistics, Minnan Normal University,
			Zhangzhou, Fujian 363000, P.R. China\\
			$^{2}$Fujian Key Laboratory of Granular Computing and Applications,
			Minnan Normal University, Zhangzhou, Fujian 363000, P.R. China\\
			$^{3}$School of Mathematical Sciences, Sichuan Normal University,
			Chengdu, Sichuan 610068, P.R. China\\[0.5ex]
			E-mails: daixinyu03520@163.com, flianglu@163.com,
			yaxianzhang@sicnu.edu.cn
		}
	}

	\maketitle

	\begin{abstract}
A matching covered
graph is a brick if it is 3-connected and bicritical. An edge of a matching covered graph $G$ is a forcing edge if it
lies in precisely one perfect matching of $G$.
We prove that every vertex of a simple brick $G$ is incident with
a forcing edge if and only if $G$ is an odd wheel
or is isomorphic to one of the following three graphs:  the
triangular prism $\overline{C_6}$, $\overline{C_6}^{+}$ or the
bicorn $H_8$. Here $\overline{C_6}^{+}$ is obtained from
$\overline{C_6}$ by adding an edge between two nonadjacent
vertices.

	\end{abstract}

	{\bf Keywords:} \  Bricks; Solid bricks; Perfect matchings; Forcing edges

	\section{Introduction}

    All graphs considered here are finite and loopless; multiple edges are allowed unless otherwise stated. The \emph {underlying graph} of $G$, denoted by $\widetilde G$, is the graph obtained from $G$ by deleting all but one of the edges joining each pair of adjacent vertices.
	For the terminology that is specific to matching covered graphs, we follow Lov\'asz and Plummer~\cite{Lovasz1986}.
	A \emph{perfect matching} is a set of edges incident with every vertex exactly once. A graph is \emph{matchable} if it has a perfect matching.
	A connected graph with at least one edge is \emph{matching covered} if each edge belongs to some perfect matching.
	A graph $G$ is \emph{bicritical} if it contains at least four vertices, and $G-u-v$ is matchable for each pair of distinct vertices $u$ and $v$ of $G$.
		For a nonempty proper subset $X$ of $V(G)$, the edge set $\{xy\in E(G): x\in X,\ y\in V(G)\setminus X\}$
		is called an \emph{edge cut} of $G$ and denoted by $\partial_G(X)$.
		The vertex sets $X$ and $\overline{X}$ are called the \emph{shores} of the edge cut $\partial_G(X)$, where $\overline{X} = V(G) \setminus X$.
	The edge cut $\partial_G(X)$ is \emph{trivial} if one of its shores has exactly one vertex, and it is \emph{tight} in a matching covered graph if every perfect matching {shares exactly one edge with it}.
	A nonbipartite matching covered graph without nontrivial tight cuts is called a \emph{brick}.
    Equivalently, by the theorem of Edmonds, Lov\'asz and Pulleyblank~\cite{Lovasz1986},
    bricks are 3-connected bicritical graphs.
	Bricks are the nonbipartite atoms in the tight cut decomposition of matching covered graphs, a theory developed by Kotzig, Lov\'asz, Plummer and others; see the monographs of Lov\'asz and Plummer~\cite{Lovasz1986} and Lucchesi and Murty~\cite{Lucchesi2024}.
	A central result is Lov\'asz's theorem~\cite{Lovasz1987}, which states that any two tight cut decompositions of a matching covered graph yield the same list of bricks and braces, up to multiple edges. Let $b(G)$ denote the number of bricks in a tight cut decomposition
		of a matching covered graph $G$.

For a graph $G$ with a perfect matching, an edge $uv$ of $G$ is
called a \emph{forcing edge} (also known as \emph{solitary} in
\cite{Lucchesi2024}) if $G-u-v$ has a unique perfect matching;
equivalently, $uv$ is contained in exactly one perfect matching
of $G$.
The idea arose in chemical graph theory in connection with
Kekul\'e structures, first through the innate degree of freedom
of Klein and Randi\'c~\cite{Klein1987} and later through the
terminology of forcing used by Harary, Klein and
\v{Z}ivkovi\'c~\cite{Harary1991}; see also Adams, Mahdian and
Mahmoodian~\cite{Adams2004} and the survey~\cite{Zhang2025b}.
	Jiang and Zhang~\cite{Jiang2011} studied the forcing matching
	number of boron--nitrogen fullerene graphs. Wu, Ye and
	Zhang~\cite{Ye2016} showed that every $3$-connected cubic graph
	with a forcing edge can be obtained from $K_4$ by a sequence of
	$Y\to\Delta$-operations.
Recently, Goedgebeur et~al.~\cite{Goedgebeur2026} studied forcing
edges in bridgeless cubic graphs. They reduced the problem to the
corresponding class of $3$-connected cubic graphs and proved that
every graph in this class has at most six forcing edges.
	In related work, Narayana et~al.~\cite{Narayana2024} studied
	solitary edges in $3$-edge-connected $r$-graphs and, in
	particular, also obtained the upper bound of six for
	$3$-connected cubic graphs.
It is easy to check that a forcing edge $e$ of a matching covered
graph $G$ remains forcing in any brick $H$ obtained from a tight
cut decomposition of $G$, provided that $e\in E(H)$.
   With the help of forcing edges, de Carvalho, Lucchesi and Murty \cite{Carvalho2004} present a characterization of extremal bricks.
   Motivated by these results, we consider a vertex version of forcing edge conditions in bricks.
   More precisely, we study bricks with the following property $\mathcal{P}$: every vertex is incident with at least one forcing edge.

	A \emph{wheel} is a graph obtained from a cycle, called the \emph{rim}, by adding a new vertex, called the \emph{hub}, adjacent to every vertex of the rim.
	The vertices and edges of the rim are called \emph {rim vertices} and \emph{rim edges}, and the edges joining the hub to the rim vertices are called \emph{spokes}.
	A wheel is \emph{odd} if its rim has odd length.
	It can be checked that in an odd wheel every spoke is a forcing edge,
	and hence every odd wheel has the property $\mathcal{P}$.

We shall also use the following three graphs shown in
Figure~\ref{fig:T6}. The graph
$\overline{C_6}$ is the triangular prism obtained from two
vertex-disjoint triangles $xyzx$ and $x'y'z'x'$ by adding
the edges $xx'$, $yy'$ and $zz'$. The graph
$\overline{C_6}^{+}$ is obtained from
$\overline{C_6}$ by adding an edge between two nonadjacent
vertices.  Finally,
$H_8$, also known as the bicorn, is obtained from two
vertex-disjoint triangles $a_0a_1a_2a_0$ and $b_0b_1b_2b_0$
by adding two additional vertices $s_1$ and $s_2$ and the
edges $a_0b_0$, $a_1s_1$, $b_1s_1$, $a_2s_2$, $b_2s_2$ and
$s_1s_2$.
	\begin{figure}[H]
		\centering
		\includegraphics[width=\textwidth]{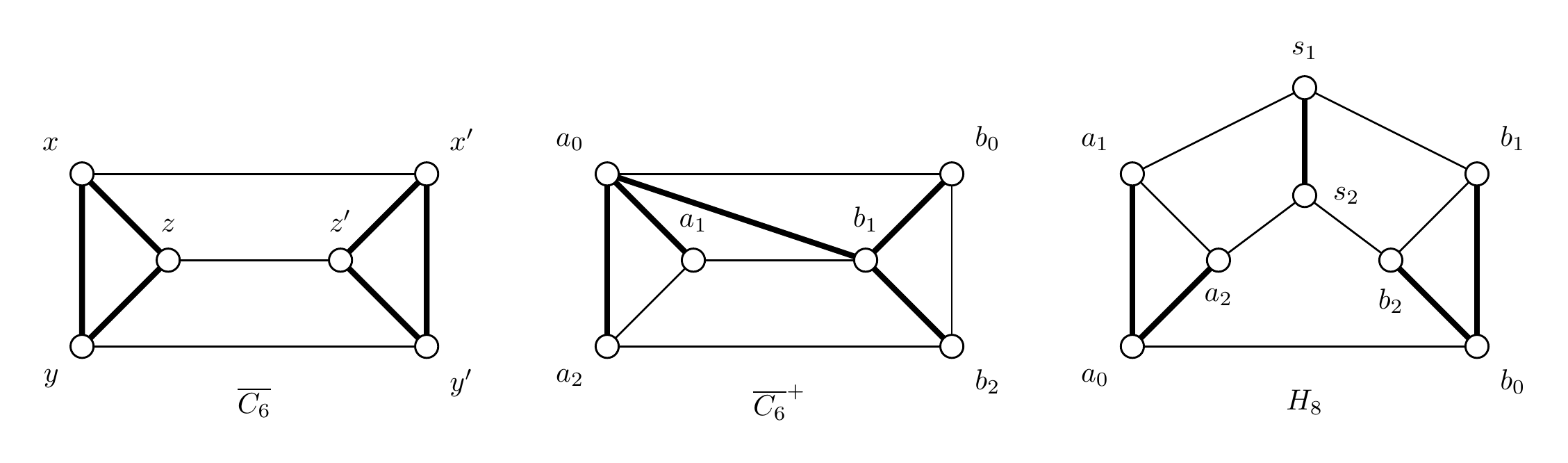}
		\caption{The graphs $\overline{C_6}$, $\overline{C_6}^{+}$ and $H_8$: the
			thick edges are forcing edges.}
		\label{fig:T6}
	\end{figure}

\begin{The}\label{thm:intro-main}
	Let $G$ be a simple brick. Then $G$ has the property $\mathcal P$
	if and only if $G$ is an odd wheel or is isomorphic to one of
	the graphs $\overline{C_6}$, $\overline{C_6}^{+}$ and $H_8$.
\end{The}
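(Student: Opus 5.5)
The sufficiency direction is a direct check. For an odd wheel with hub $h$ and rim $v_1\cdots v_{2k+1}$, deleting $h$ and $v_i$ leaves a path on $2k$ vertices, which has exactly one perfect matching. Hence every spoke is forcing, and since every vertex lies on a spoke, $\mathcal P$ holds. For $\overline{C_6}$, $\overline{C_6}^{+}$ and $H_8$ I would verify directly that the thick edges in Figure~\ref{fig:T6} are forcing and cover all vertices. In $\overline{C_6}$, for instance, $xx'$ is forcing because $G-x-x'$ is the $4$-cycle $yzz'y'$ with chord-free structure, so the required edge set is forced; the other two graphs are handled by finite case checks.

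For necessity I would rely on two structural observations. Let $e=uv$ be a forcing edge of a brick $G$. Then $G-u-v$ has a unique perfect matching $M_e$, so by Kotzig's theorem $G-u-v$ has a bridge belonging to $M_e$. More usefully, the graph $G-u-v$ has a nested structure: it has a vertex set $X$ with $\partial(X)$ consisting of a single $M_e$-edge, and recursing gives an ordering. The second observation comes from bicriticality of $G$: for every $w\ne u,v$, the graph $G-u-w$ is matchable, so the edge at $u$ in such a matching is not $e$. Combined with $3$-connectivity, this says that $u$ and $v$ must "see" the rigid pieces of $G-u-v$ in a controlled way. I would aim first for a lemma that a forcing edge $uv$ in a simple brick forces $G-u-v$ to look like a path-like chain of $M_e$-edges, with all other edges of $G-u-v$ going "one direction" along the chain (the standard characterization of graphs with a unique perfect matching via alternating cycles: no $M_e$-alternating cycle exists).

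Next, using $\mathcal P$, I would choose a set $F$ of forcing edges covering all vertices. The key step is to show that forcing edges are scarce and interact strongly. If $e,f$ are disjoint forcing edges, then $f\in M_e$ and $e\in M_f$, so $M_e=M_f$; more generally, every perfect matching of $G$ containing two disjoint forcing edges is the common one. Thus any two disjoint forcing edges lie in a single perfect matching $M_0$. If $F$ contains a matching covering $V(G)$, then $F=M_0$ and every edge of $M_0$ is forcing, meaning $M_0$ is the only perfect matching of $G$ containing any of its edges. Combining this with the absence of $M_0$-alternating cycles through each edge, I would derive strong degree constraints, and then $3$-connectivity and bicriticality should pin $G$ down to $\overline{C_6}$, $\overline{C_6}^{+}$ or $H_8$, with $n\le 8$. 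Otherwise $F$ contains two forcing edges sharing a vertex $h$. A counting argument should then show that many forcing edges share $h$, and in the extreme case all vertices are covered by forcing edges at $h$, which should yield the odd wheel: $G-h-v_i$ has a unique perfect matching for every neighbour $v_i$, which together with $3$-connectivity forces $G-h$ to be an odd cycle.

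The main obstacle I expect is the mixed case, where the forcing edges form a structure that is neither a single star nor a perfect matching, for example a star at $h$ together with some disjoint forcing edges. Here I would first try to show that two intersecting forcing edges $hu$, $hw$ together with a disjoint forcing edge $f$ lead to a contradiction or to $H_8$ or $\overline{C_6}^{+}$. The argument would use the bridge structure of $G-h-u$ from Kotzig's theorem, together with the fact that $f$ lies in both $M_{hu}$ and $M_{hw}$, to produce a $2$-vertex cut or a non-matchable $G-x-y$, contradicting that $G$ is a brick. The bounded-order cases could be finished by exhaustive inspection.
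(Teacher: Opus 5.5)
\textbf{Verdict: genuine gap.} The claim your case analysis rests on is false, and the mixed case, which is where the three sporadic graphs arise, is left open.

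\textbf{The false step.} You claim that two disjoint forcing edges $e,f$ satisfy $f\in M_e$ and $e\in M_f$, so that all disjoint forcing edges lie in one perfect matching $M_0$. Only uniqueness holds: a perfect matching containing both would be the common one. Existence does not. In $\overline{C_6}$ the edges $xy$ and $y'z'$ are disjoint and both forcing, yet the perfect matching through $xy$ is $\{xy,zz',x'y'\}$ and the one through $y'z'$ is $\{y'z',xx',yz\}$.

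\textbf{The case analysis built on it.} Your split into a matching case, a star case and a mixed case depends on this claim, and its predicted outcomes are also off.
\begin{itemize}
\item None of $\overline{C_6}$, $\overline{C_6}^{+}$, $H_8$ has a perfect matching of forcing edges. In $\overline{C_6}^{+}$ and $H_8$ the only forcing edges at $a_1$ and $a_2$ pass through $a_0$. In $\overline{C_6}$ the forcing edges are exactly the six triangle edges. So within the theorem's list, your matching case should end at $K_4$, not at the three sporadic graphs.
\item The three sporadic graphs all lie in the mixed case, which you leave open. You give no bound on $|V(G)|$ there, so ``exhaustive inspection'' has nothing finite to inspect.
\item In the star case, the claim that $G-h-v$ being uniquely matchable for every $v$, together with $3$-connectivity, forces $G-h$ to be a cycle is asserted, not proved.
\end{itemize}
Your sufficiency example is also wrong. $\overline{C_6}-x-x'$ is the $4$-cycle $yzz'y'$, which has two perfect matchings, so $xx'$ is not forcing.

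\textbf{The missing idea.} You need a source of local structure at a forcing edge $uv$. Kotzig's theorem only gives a tree-like recursive decomposition of $G-u-v$. What the argument actually needs is vertices of degree one in $G-u-v$, which are then cubic in $G$ and adjacent to both $u$ and $v$. These need not exist in a general brick: in $\overline{C_6}$, $G-x-y$ has only one vertex of degree one, namely $z$.

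\textbf{How the paper gets it.} The paper splits into solid and nonsolid bricks.
\begin{itemize}
\item Solid bricks. Solidity is equivalent to the odd cycle property, which passes to $G-u-v$. With the Zhang--Zhang lemma and an alternating-path argument, this yields two degree-one vertices adjacent to both $u$ and $v$ (Lemma~\ref{lem:forcing edge}). Iterating along forcing edges builds a hub with a rim of cubic vertices, that is, an odd wheel (Lemma~\ref{lem:oddwheel}).
\item Nonsolid bricks. The robust cut theorem (Theorem~\ref{lem:robustcut}) and Lemma~\ref{lem:bipartite} show that property $\mathcal P$ collapses the bipartite middle part. So $G=W(w)\odot G'(w')$, where $W$ is an odd wheel up to multiple edges and $G'$ is a brick with $\mathcal P$ having fewer robust cuts. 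Induction plus Lemmas~\ref{lem:rim vertex}, \ref{lem:hub} and \ref{lem:k4-splicing} exclude wheels with at least six vertices on either side. What remains is $K_4$ spliced with $K_4$, giving $\overline{C_6}$ or $\overline{C_6}^{+}$, or with $\overline{C_6}$, giving $H_8$.
\end{itemize}
This decomposition is what controls your ``mixed'' configurations and bounds the order. Without it or a substitute, your plan does not reach the conclusion.
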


 The terminology concerning solid bricks, robust cuts and splicing
	is introduced in Section~\ref{sec:preliminaries}. In the same section, we prove that a
	simple solid brick with the property $\mathcal P$ is an odd wheel. In Section~\ref{sec:splicing}, we study
	splicings of an odd wheel up to multiple edges and a brick. 
	Finally, in Section~\ref{sec:main}, we prove Theorem~\ref{thm:intro-main} using
	a robust cut and induction on the number of robust cuts of a brick.


	\section{Preliminaries}\label{sec:preliminaries}

For a vertex $v\in V(G)$, let $\partial_G(v)$ be the set of all
edges incident with $v$. The \emph{degree} of $v$, denoted by
$d_G(v)$, is defined by $d_G(v)=|\partial_G(v)|$. We denote by
$N_G(v)$ the set of distinct neighbors of $v$. A vertex of degree $k$ is also called a \emph{$k$-degree} vertex.
	For $x,y\in V(G)$, a path with ends $x$ and $y$ is called an
	\emph{$x$-$y$ path}. For a graph $G$ with a matching $M$, a path is called an \emph{$M$-alternating path} if its edges alternate between $M$ and $E(G)\setminus M$.
For $X\subseteq V(G)$, let $G/(X\to x)$ denote the graph obtained
from $G$ by contracting $X$ to a single vertex $x$. When the label
$x$ is irrelevant, we simply write $G/X$. For an edge cut
$C=\partial_G(X)$, the graphs $G/X$ and $G/\overline X$ are called
the \emph{$C$-contractions} of $G$.

	\subsection{Cuts and Splicing}

	We shall use the following splicing operation.
	Let $G$ and $H$ be vertex-disjoint graphs, and let $u\in V(G)$ and $v\in V(H)$ satisfy $d_G(u)=d_H(v)$.
	Given a bijection $\theta:\partial_H(v)\to \partial_G(u)$, the graph obtained by splicing $G$ at $u$ with $H$ at $v$, denoted by
	$(G(u)\odot H(v))_\theta$, is obtained from $G-u$ and $H-v$ by joining, for each edge $vw\in \partial_H(v)$, the vertex $w$ to the other end of $\theta(vw)$ in $G$.
	The set of these new edges is called the \emph{splicing cut}.
	
  An edge cut $C$ of a matching covered graph $G$ is called a
  \emph{separating cut} if both $C$-contractions of $G$ are
  matching covered.
  A matching covered graph $G$ is a \emph{near-brick} if $b(G)=1$.
   A separating cut $C$ of a matching covered graph $G$ is
  \emph{robust} if $C$ is not tight and both $C$-contractions of
  $G$ are near-bricks.
   If $S\subseteq V(G)$ and $F\subseteq E(G)$, we say that $S$
   \emph{covers} $F$ if every edge of $F$ has an end in $S$, and
   that $F$ \emph{saturates} $S$ if every vertex of $S$ is incident
   with an edge of $F$.


	\begin{Lem}\label{H}
		\cite{Lovasz1986}
		If a bipartite graph $H$ has at least one edge and a unique
		perfect matching, then each part of a bipartition of $H$ contains
		a vertex $v$ with $d_H(v)=1$.
	\end{Lem}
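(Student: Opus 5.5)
The plan is to prove the lemma by contradiction, using an alternating-cycle argument. Let $H$ have bipartition $(A,B)$ and unique perfect matching $M$. Since $H$ has a perfect matching, $|A|=|B|$ and every vertex has degree at least $1$. Because $H$ has at least one edge, $M\neq\emptyset$, so both $A$ and $B$ are nonempty. By symmetry it suffices to show that $A$ contains a vertex of degree $1$. Suppose, to the contrary, that $d_H(a)\ge 2$ for every $a\in A$.

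The key step is to build an $M$-alternating walk that never gets stuck. Start at any $a_0\in A$. Since $d_H(a_0)\ge 2$, there is an edge $a_0b_1\notin M$ with $b_1\in B$. Let $b_1a_1$ be the $M$-edge at $b_1$; since $M$ is perfect, it exists, and $a_1\in A$. Again $a_1$ has an edge $a_1b_2\notin M$, because it has degree at least $2$ and only one incident $M$-edge. Continue in this way, alternating non-$M$ edges from $A$ to $B$ with $M$-edges from $B$ to $A$. Since $H$ is finite, some vertex of $A$ repeats. The first repetition yields a closed walk $a_i b_{i+1} a_{i+1}\cdots b_j a_j$ with $a_j=a_i$.

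One must check that this closed walk is a genuine cycle alternating between $M$ and $E(H)\setminus M$. The $a$'s between the two occurrences are distinct by minimality of the repetition. Each $b_k$ is determined by $a_k$ as its $M$-partner, so the $b$'s are distinct as well. The two edges at each $b_k$ are one non-$M$ edge and one $M$-edge. The two edges at $a_i=a_j$ are the $M$-edge $b_ja_j$ and the non-$M$ edge $a_ib_{i+1}$. The cycle therefore has even length at least $4$, since a non-$M$ edge and an $M$-edge are distinct, and its edges alternate. Then $M\triangle E(\text{cycle})$ is a second perfect matching of $H$, which contradicts uniqueness. Hence $A$ has a vertex of degree $1$, and the same argument with the roles of $A$ and $B$ exchanged handles $B$.

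The only delicate point is this cycle-extraction step: confirming that the first repetition closes an alternating cycle rather than a degenerate walk. Bipartiteness matters here, because it guarantees the closing occurs at an $A$-vertex with the correct edge types on both sides, so no odd cycle or blossom can arise.
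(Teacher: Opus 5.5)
Your proof is correct. The paper gives no proof of this lemma; it only cites Lov\'asz--Plummer, so your argument is a self-contained replacement rather than a variant of something in the text.

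What you give is the standard alternating-cycle argument in contrapositive form. Equivalently: orient the non-$M$ edges from $A$ to $B$ and the $M$-edges from $B$ to $A$. If no vertex of $A$ has degree $1$, every vertex has out-degree at least $1$, so there is a directed cycle, i.e.\ an $M$-alternating cycle. This handles one part at a time, and you get the other by symmetry.

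The paper itself uses a different device in a similar situation, in the proof of Lemma~\ref{lem:forcing edge}: a maximal $M$-alternating path. Take a longest $M$-alternating path whose first and last edges lie in $M$. It has odd length, so its ends lie in opposite parts. Each end has degree $1$, because any further edge at an end either extends the path (the new neighbour's $M$-partner is off the path) or, by bipartiteness, closes an even alternating cycle with a subpath. This produces both required vertices at once, without appealing to symmetry.

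One small point concerns multiple edges. The paper allows them, and Lemma~\ref{lem:bipartite} applies this statement to $H-u-v$, where $H$ is a contraction that may have parallel edges and $d_H$ counts multiplicity. In that setting, your justification for ``length at least $4$'' (that a non-$M$ edge and an $M$-edge are distinct) does not exclude a closed walk of length $2$, namely a non-$M$ edge parallel to the $M$-edge at $a_i$. This is harmless: swapping those two parallel edges again gives a second perfect matching, so the contradiction stands.
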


\begin{Lem}\label{lem:bipartite}
	Assume that $\partial(X_i)$ is an edge cut of a brick $G$ such
	that $G_i=G/\overline{X_i}$ is a brick for each
	$i\in\{1,2\}$, and that
	$(G/(X_1\to x_1))/(X_2\to x_2)$ is a matching covered
	bipartite graph $H$. If $|V(H)|\geq4$, then every edge of $H$
	not joining $x_1$ and $x_2$ is nonforcing in $H$.
\end{Lem}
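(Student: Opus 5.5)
The plan is to argue by contradiction: suppose some edge $e=uv$ of $H$ that does not join $x_1$ and $x_2$ is forcing in $H$. Then $H'=H-u-v$ is a bipartite graph with a unique perfect matching $M'$. Since $|V(H)|\ge 4$, $H'$ has at least one edge, so Lemma~\ref{H} applies. It gives, in each colour class of $H'$, a vertex $w$ with $d_{H'}(w)=1$.

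First I would record what such a vertex looks like in $H$. Since $H$ is bipartite, $w$ is adjacent in $H$ to exactly one of $u,v$; call it $u'$. Apart from $u'$, $w$ is adjacent only to its $M'$-partner $y$, possibly through several parallel edges. Parallel edges in $H$ can only be incident with $x_1$ or $x_2$. So in $G$ the neighbourhood of $w$ (or of $X_i$, if $w=x_i$) is contained in $\{u'\}\cup Y$, where $Y=\{y\}$ if $y$ is an ordinary vertex of $G$ and $Y=X_j$ if $y=x_j$. The main step is to choose one of the (at least two) degree-one vertices so that this leads to a contradiction.

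I would then split into cases according to whether $w$ and $y$ are ordinary vertices or contraction vertices.

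\emph{Case 1: both $w$ and $y$ are ordinary.} Then $w$ has at most two distinct neighbours in $G$. This contradicts $G$ being $3$-connected with at least four vertices.

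\emph{Case 2: $w=x_i$ and $y$ is ordinary.} Then every edge of $\partial_G(X_i)$ ends in $u'$ or $y$. Because $|V(H)|\ge 4$, some vertex of $H$ lies outside $\{x_i,u',y\}$. So $\{u',y\}$ separates $X_i$ from the rest of $G$, again contradicting $3$-connectivity.

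\emph{Case 3: $y=x_j$.} Here the vertex $w$ (or the set $X_i$, if $w=x_i$) sends edges only to $u'$ and to $X_j$. For this case I would use that $G_j=G/\overline{X_j}$ is a brick, together with the hypothesis that $e$ is not the edge $x_1x_2$. If $u'$ and $X_j$ are the only outlets, I expect to get either a $2$-separation of $G_j$, or a nontrivial tight cut $\partial(X_j\cup\{w\})$ (respectively $\partial(X_i\cup X_j)$) in $G$. For the tight-cut alternative I would check by parity that every perfect matching of $G$ uses exactly one edge of this cut. That contradicts $G$ being a brick.

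I expect Case~3 to be the main obstacle, especially the subcase where $w$ is itself a contraction vertex and $y$ is the other one. This is exactly the configuration where the hypothesis ``not joining $x_1$ and $x_2$'' must be used. To handle it, I would first use that Lemma~\ref{H} supplies degree-one vertices in both colour classes of $H'$. This should let me pick a $w$ that avoids the bad configuration, since at most two vertices of $H$ are contraction vertices. If that choice is not always possible, I would fall back on bicriticality: delete $u'$ and a suitable vertex of $X_j$ from $G$, and follow the resulting perfect matching of $G$ back to a second perfect matching of $H$ containing $e$. That would contradict the uniqueness of $M'$.
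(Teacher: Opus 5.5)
\textbf{There is a genuine gap: the proposal puts the multiplicity in the wrong place.}

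Since $d_{H'}(w)=1$, the vertex $w$ is joined to its partner $y$ by exactly one edge. Any parallel edges at $w$ can only go to $u'$, not to $y$.

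Conversely, $u'\in\{u,v\}$ may itself be a contraction vertex; the hypothesis only rules out $\{u,v\}=\{x_1,x_2\}$. Your description $N_G(w)\subseteq\{u'\}\cup Y$ nevertheless treats $u'$ as an ordinary vertex. This breaks your cases:
\begin{itemize}
\item If $u'=x_j$, the conclusion of Case~1 (``$w$ has at most two distinct neighbours in $G$'') is false, since $w$ may have several neighbours in $X_j$.
\item For the same reason, Case~2 gives no $2$-separation: $\partial_G(X_i)$ may end in $X_j\cup\{y\}$.
\end{itemize}

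Case~1 as written never uses $uv\neq x_1x_2$, yet for $uv=x_1x_2$ this configuration really occurs. Take triangles $p_1p_2p_3$ and $q_1q_2q_3$, vertices $a,b$, and the extra edges $ab,bp_1,bp_2,aq_1,aq_2,p_1q_1,p_3q_3$. This graph is a $3$-connected bicritical graph, and $X_1=\{p_1,p_2,p_3\}$, $X_2=\{q_1,q_2,q_3\}$ satisfy all the hypotheses. Here $H-x_1-x_2$ is the single edge $ab$, so $x_1x_2$ is forcing in $H$. Meanwhile $a$ and $b$ are exactly the ordinary degree-one vertices of your Case~1, each with three distinct neighbours in $G$.

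Case~3 does not rescue the argument:
\begin{itemize}
\item It is built on multiple edges between $w$ and $y$, which cannot exist.
\item The candidate tight cuts $\partial(X_j\cup\{w\})$ and $\partial(X_i\cup X_j)$ have even shores, because each $|X_i|$ is odd (as $G/\overline{X_i}$ is a brick). So no parity argument can make them tight.
\item The bicriticality fallback is not carried out.
\end{itemize}

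You have also misidentified where the hypothesis matters. In the subcase $w=x_i$, $y=x_j$, the vertex $u'$ is necessarily ordinary. Let $z$ be the end in $X_j$ of the single edge $x_ix_j$ of $H'$. Then $\{u',z\}$ is a $2$-separation of $G$, and this contradiction needs no hypothesis on $uv$.

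\textbf{The missing idea makes the case analysis unnecessary.} Take any degree-one vertex $w$ of $H'$. The single edge $wy$ accounts for at most one neighbour in $G$ of $w$ (or of $X_i$, if $w=x_i$). But $3$-connectivity of $G$ gives at least three distinct such neighbours. Hence at least two of them lie in the set represented by $u'$, so $u'$ must be a contraction vertex.

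Lemma~\ref{H} supplies such a $w$ in each colour class: one whose neighbour among $u,v$ is $v$, and one whose neighbour is $u$. Hence both $u$ and $v$ are contraction vertices, i.e.\ $uv$ joins $x_1$ and $x_2$, a contradiction.

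This is the paper's argument. It uses only $3$-connectivity of $G$ and the fact that every vertex of $H$ has at least two distinct neighbours. The hypothesis $uv\neq x_1x_2$ enters only at the very end. You do not need to choose $w$ cleverly: the contradiction comes from combining one degree-one vertex from each class, not from a single well-chosen one.
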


\begin{proof}
Let $A$ and $B$ be the two parts of a bipartition of $H$, and let $uv$ be
an edge of $H$ not joining $x_1$ and $x_2$, where $u\in A$ and
$v\in B$. Suppose, to the contrary, that $uv$ is forcing in $H$.
Then $H-u-v$ has a unique perfect matching. Since
$|V(H)|\geq4$, the graph $H-u-v$ has at least one edge. By
Lemma~\ref{H},
there exist $a\in A\setminus\{u\}$ and
$b\in B\setminus\{v\}$ such that
$d_{H-u-v}(a)=d_{H-u-v}(b)=1$.
Since $H$ is matching covered and $|V(H)|\geq4$, every vertex
of $H$ has at least two distinct neighbors. Since $a$ is not
adjacent to $u$, it has exactly two distinct neighbors in $H$,
one of which is $v$. Similarly, $b$ has exactly two distinct
neighbors in $H$, one of which is $u$.
Since $G$ is $3$-connected, each of $a$ and $b$ has at least
three neighbors in $G$. Thus each is joined by multiple edges
in $H$ to a contraction vertex. Since
$d_{H-u-v}(a)=d_{H-u-v}(b)=1$, these contraction vertices are
$v$ and $u$, respectively. Hence
$\{u,v\}=\{x_1,x_2\}$, contrary to the choice of $uv$.
Therefore, every edge of $H$ not joining $x_1$ and $x_2$ is
nonforcing in $H$.
\end{proof}

	\begin{Lem}\label{thm:splicing-brick}
		\cite{Carvalho2004}
		For $i=1,2$, let $G_i$ be a brick, let $x_i\in V(G_i)$, and let
		$G=(G_1(x_1)\odot G_2(x_2))_{\theta}$.
		Then $G$ is a brick if and only if no pair of vertices of $G$, one in
		$V(G_1)\setminus\{x_1\}$ and the other in $V(G_2)\setminus\{x_2\}$,
		covers the set of edges in $\partial_G(V(G_1)\setminus\{x_1\})$.
	\end{Lem}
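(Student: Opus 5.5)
The statement just given is Lemma~\ref{thm:splicing-brick}, which is quoted from \cite{Carvalho2004}. Within this paper it is assumed, not proved. Still, here is how I would prove it, using the characterization of bricks as $3$-connected bicritical graphs. Write $X=V(G_1)\setminus\{x_1\}$ and $\overline X=V(G_2)\setminus\{x_2\}$, and let $C=\partial_G(X)$ be the splicing cut. Then $G/(\overline X\to x_1)=G_1$ and $G/(X\to x_2)=G_2$, so both $C$-contractions are bricks.

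\emph{Necessity.} Suppose $u\in X$ and $v\in\overline X$ together cover $C$. Since $|X|$ is odd, $|X\setminus\{u\}|$ is even. In $G-u-v$ no edge joins $X\setminus\{u\}$ to $\overline X\setminus\{v\}$, because every edge of $C$ has an end in $\{u,v\}$. So a perfect matching of $G-u-v$ would have to match the odd set $X\setminus\{u\}$ internally, which is impossible. Thus $G$ is not bicritical, and hence not a brick. (Nonemptiness of the shores is clear, since $|V(G_i)|\ge4$.)

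\emph{Sufficiency.} There are three things to check.

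First, $G$ is matching covered. Take any edge $e$. If $e\in C$, it corresponds to edges of both $G_1$ and $G_2$; choose perfect matchings of $G_1$ and $G_2$ containing these edges and glue them along $e$. If $e$ lies in $G_1-x_1$, take a perfect matching of $G_1$ through $e$; it uses exactly one edge $f$ of $C$, and gluing it with a perfect matching of $G_2$ through $f$ gives a perfect matching of $G$ containing $e$. The case $e$ in $G_2-x_2$ is symmetric.

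Second, $G$ is bicritical. Remove two vertices $u,v$.
\begin{itemize}
\item If $u,v\in X$, the set $X\setminus\{u,v\}$ is odd. Use bicriticality of $G_1$: take a perfect matching of $G_1-u-v$. It contains exactly one edge at $x_1$, which corresponds to an edge $f\in C$. Extend it by a perfect matching of $G_2$ containing $f$.
\item If $u\in X$ and $v\in\overline X$, we need an edge $f=ab\in C$ with $a\ne u$ and $b\ne v$, that is, an edge of $C$ not covered by $\{u,v\}$. The hypothesis provides exactly this. Then take a perfect matching of $G_1-u-a$ (mapping $a$'s partner to $x_1$), which is possible by bicriticality of $G_1$ applied to the pair $u$ and the end of $f$ in $X$. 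Combine it with a perfect matching of $G_2-v-b$ similarly, and add $f$.
\end{itemize}

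Third, $G$ is $3$-connected. Suppose $\{s,t\}$ separates $G$. Each of $G_1-x_1$ and $G_2-x_2$ is $2$-connected, since $G_1$ and $G_2$ are $3$-connected. Consider the cases:
\begin{itemize}
\item If $s,t\in X$, then every component of $G-\{s,t\}$ either meets $\overline X$ or, inside $G_1$, is joined to $x_1$. The latter holds because $G_1-\{s,t\}$ is connected. Since $G[\overline X]$ is connected, $G-\{s,t\}$ is connected.
\item If $s\in X$ and $t\in\overline X$, then $G_1-x_1-s$ and $G_2-x_2-t$ are connected. So $G-\{s,t\}$ is disconnected only if no edge of $C$ avoids both $s$ and $t$, that is, only if $\{s,t\}$ covers $C$. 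The hypothesis excludes this.
\end{itemize}
Finally, $G$ is nonbipartite: $G$ is bicritical, and a bipartite graph with a perfect matching has $G-u-v$ unbalanced when $u,v$ lie in the same class.

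The main obstacle is keeping the case analysis in the mixed case correct. This is the only place where the covering hypothesis is used, and it is used identically for bicriticality and for $3$-connectivity.
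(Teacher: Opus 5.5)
Your $3$-connectivity argument and the same-shore case of bicriticality are fine. However, you have the parity of the shores wrong, and this breaks both the necessity direction and the mixed case of bicriticality. Since $|V(G_1)|$ is even, $|X|$ is odd and $X\setminus\{u\}$ is \emph{even}. So nothing forces an edge of $C$ into a perfect matching of $G-u-v$. In fact, for $u\in X$ and $v\in\overline X$, take a perfect matching of $G_1-x_1-u$ and a perfect matching of $G_2-x_2-v$; both exist by bicriticality, and their union is a perfect matching of $G-u-v$. Together with your same-shore gluing argument, this shows that a splicing of two bricks is \emph{always} bicritical. So the claim ``$G$ is not bicritical'' in your necessity proof is false in every instance. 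What actually fails when $\{u,v\}$ covers $C$ is $3$-connectivity. The graph $G-u-v$ has no edge between the nonempty sets $X\setminus\{u\}$ and $\overline X\setminus\{v\}$, so $\{u,v\}$ is a $2$-vertex cut. Alternatively, every perfect matching of $G$ meets $C$ in an odd number of edges, each with an end in $\{u,v\}$, hence in exactly one edge; so $C$ is a nontrivial tight cut.

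The same slip makes your mixed case of bicriticality incoherent. A perfect matching of $G_1-u-a$ contains an edge $x_1c$ with $c\in X\setminus\{u,a\}$. In $G$ this is an edge of $C$ whose other end you do not control: it may be $v$, $b$, or a vertex already covered by your matching of $G_2-v-b$. So the three pieces do not combine into a matching. Nor can any repair work that uses only $f$ from $C$, since $X\setminus\{u,a\}$ has odd size. Replace this step by the union argument above, which needs no hypothesis at all. Consequently your closing remark is also wrong: the covering condition is used only for $3$-connectivity. The lemma really says that a splicing of two bricks, being always bicritical, is a brick exactly when it is $3$-connected. With these two repairs your argument is a complete proof. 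The paper itself does not prove the lemma; it quotes it from de Carvalho, Lucchesi and Murty.
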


	The following result follows directly from the correspondence between
	perfect matchings on the two shores of a splicing cut.

	\begin{Pro}\label{pro:M}
		Let $G_1$ and $G_2$ be matching covered graphs, let
		$G=(G_1(u)\odot G_2(v))_{\theta}$, and let $C$ be the splicing cut.
		For $ua\in\partial_{G_1}(u)$, write
		$\theta^{-1}(ua)=vb$, and let $ab$ be the corresponding edge of $C$.
		\begin{enumerate}[(i)]
			\item Let $e\in E(G_1-u)$. If $e$ is forcing in $G$, then $e$
			is forcing in $G_1$. Moreover, if the unique perfect matching of
			$G_1$ containing $e$ uses $ua$, then $vb$ is forcing in $G_2$,
			and every perfect matching of $G$ containing $e$
			{shares exactly one edge with $C$, namely $ab$}.
			\item If $ab$ is forcing in $G$, then $ua$ and $vb$ are forcing
			in $G_1$ and $G_2$, respectively, and every perfect matching of
			$G$ containing $ab$ {shares exactly one edge with $C$, namely $ab$}.
		\end{enumerate}
	\end{Pro}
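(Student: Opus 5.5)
The plan is to exploit the bijection between perfect matchings of $G$ and compatible pairs of perfect matchings of $G_1$ and $G_2$. First I will set up this correspondence. Let $X=V(G_1)\setminus\{u\}$, so $C=\partial_G(X)$. For each edge $ab\in C$, with $\theta^{-1}(ua)=vb$, and any perfect matchings $M_1$ of $G_1$ containing $ua$ and $M_2$ of $G_2$ containing $vb$, the set
\[
(M_1\setminus\{ua\})\cup(M_2\setminus\{vb\})\cup\{ab\}
\]
is a perfect matching of $G$ with $|M\cap C|=1$. Conversely, every perfect matching $M$ of $G$ with $|M\cap C|=1$, say $M\cap C=\{ab\}$, arises in this way from a unique such pair. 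Here $M_1=(M\cap E(G[X]))\cup\{ua\}$, and $M_2$ is defined analogously. This gives an injection $(M_1,M_2)\mapsto M$ from compatible pairs into perfect matchings of $G$.

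For (ii), I will argue by contrapositive. Suppose $ua$ lies in two perfect matchings $M_1\neq M_1'$ of $G_1$. Since $G_2$ is matching covered, it has a perfect matching $M_2$ containing $vb$. Combining $M_2$ with $M_1$ and with $M_1'$ gives two distinct perfect matchings of $G$ containing $ab$, so $ab$ is not forcing. The same argument applies with the roles of $G_1$ and $G_2$ exchanged. Hence if $ab$ is forcing, both $ua$ and $vb$ are forcing. Also, $ab$ lies in at least one perfect matching that meets $C$ exactly once, namely the one obtained from compatible matchings $M_1\ni ua$, $M_2\ni vb$. Since $ab$ is forcing, that matching is the only perfect matching containing $ab$, which gives the final clause.

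For (i), let $e\in E(G_1-u)=E(G[X])$ be forcing in $G$. Since $G_1$ is matching covered, take a perfect matching $M_1$ of $G_1$ containing $e$; it uses exactly one edge at $u$, say $ua'$. Choose $vb'=\theta^{-1}(ua')$ and a perfect matching $M_2\ni vb'$ of $G_2$. Combining gives a perfect matching of $G$ containing $e$. If $G_1$ had two perfect matchings containing $e$, we would get two distinct combinations, so $e$ is forcing in $G_1$. Now let $M_1$ be that unique matching and suppose it uses $ua$. If $vb$ lay in two perfect matchings of $G_2$, combining each with $M_1$ would give two perfect matchings of $G$ containing $e$. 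So $vb$ is forcing in $G_2$. Finally, $e$ lies in a perfect matching of $G$ that meets $C$ only in $ab$, and by uniqueness this is the only perfect matching of $G$ containing $e$.

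The only delicate point is this last uniqueness claim. It is not needed as a separate argument, because forcing means $e$ lies in exactly one perfect matching, and we have exhibited one with $M\cap C=\{ab\}$. Similarly, for the first claim of (i) no parity argument about $|M\cap C|$ is required, since we only construct matchings and never decompose arbitrary ones. So I expect no real obstacle. The one thing to check is that distinct $M_1$ (or distinct $M_2$) yield distinct $M$, which holds because the map is injective: $M_1$ is recovered from $M$ as $(M\cap E(G[X]))\cup\{ua\}$.
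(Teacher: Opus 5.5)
Your proposal is correct and follows the paper's approach. The paper only remarks that the proposition follows from the correspondence between perfect matchings of $G$ that meet $C$ exactly once and compatible pairs $(M_1,M_2)$ of perfect matchings of $G_1$ and $G_2$; you spell out that correspondence, including the injectivity that makes distinct $M_1$ (or $M_2$) give distinct perfect matchings of $G$, and you correctly note that arbitrary perfect matchings of $G$ never need to be decomposed.
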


	\begin{Pro}\label{pro:splicing-degree}
		Let $G\in G_1(v_1)\odot G_2(v_2)$, and let $C$ be the splicing cut.
		For $i\in\{1,2\}$, let $f\in E(G_i-v_i)$ be a forcing edge of
		$G_i$. If the unique perfect matching of $G_i$ containing $f$
		contains an edge joining $v_i$ and $w$, then
		$|\partial_G(w)\cap C|=1$.
	\end{Pro}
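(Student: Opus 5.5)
Without loss of generality take $i=1$, and write $M_1$ for the unique perfect matching of $G_1$ that contains $f$; by hypothesis $v_1w\in M_1$, so $w\in V(G_1)\setminus\{v_1\}$. The identity $|\partial_G(w)\cap C|=|\partial_{G_1}(w)\cap\partial_{G_1}(v_1)|$ holds because the splicing cut $C$ corresponds bijectively, via $\theta$, to the edges of $G_1$ at $v_1$. So the claim is that $w$ is joined to $v_1$ by exactly one edge in $G_1$, i.e.\ there is no parallel edge $v_1w$. I will argue by contradiction: assume $G_1$ has two distinct edges $e_1=v_1w$ and $e_2=v_1w$, with $e_1\in M_1$.

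The key step is to swap one parallel edge for the other. Put $M_1'=(M_1\setminus\{e_1\})\cup\{e_2\}$. Since $e_1$ and $e_2$ have the same ends, $M_1'$ is again a perfect matching of $G_1$. It contains $f$, because $f\in E(G_1-v_1)$ and so $f\neq e_1$. It differs from $M_1$, since $e_2\notin M_1$. Thus $f$ lies in two distinct perfect matchings of $G_1$, contradicting that $f$ is forcing in $G_1$. Hence $w$ has exactly one edge to $v_1$ in $G_1$, which gives $|\partial_G(w)\cap C|=1$.

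The same argument applies verbatim for $i=2$. The only point needing care is the translation between $C$ and $\partial_{G_1}(v_1)$, which is immediate from the definition of splicing: each edge of $C$ incident with $w$ comes from an edge $v_1w$ of $G_1$, via $\theta$ together with the endpoint on the $G_2$ side. So I see no real obstacle; the statement is essentially the observation that a forcing edge's unique perfect matching cannot use an edge that has a parallel copy. The hypothesis that $G_1$, $G_2$ are matching covered is not even needed here.
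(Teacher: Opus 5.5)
Your proof is correct and is essentially the paper's argument: you identify edges of $C$ at $w$ with parallel edges $v_iw$ in $G_i$, then swap one parallel copy for the other in the unique perfect matching containing $f$, which produces a second perfect matching containing $f$. Your explicit checks that $f\neq e_1$ and $e_2\notin M_1$ fill in small details the paper leaves implicit.
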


	\begin{proof}
		By the definition of splicing, the edge joining $v_i$ and $w$ corresponds to an edge of $C$
		incident with $w$. If $|\partial_G(w)\cap C|\geq2$, then $G_i$
		contains at least two distinct edges joining $v_i$ and $w$.
		Replacing one by another in the unique perfect matching containing
		$f$ gives a second perfect matching containing $f$, a contradiction.
	\end{proof}

	\begin{Pro}\label{pro:underlying-simple}
		Let $G$ be a graph, possibly with multiple edges. Then $G$ is a
		brick if and only if $\widetilde G$ is a brick. Moreover, if
		$G$ has the property $\mathcal P$, then $\widetilde G$ also has the
		property $\mathcal P$.
	\end{Pro}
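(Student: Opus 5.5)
The statement has two parts, and I will handle them separately.

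For the first part, I will use the characterization of bricks as $3$-connected bicritical graphs (Edmonds, Lov\'asz and Pulleyblank). Both conditions depend only on adjacency between distinct vertices. Deleting parallel edges does not change which pairs of vertices are adjacent, so it does not affect vertex connectivity. It also does not affect whether $G-u-v$ is matchable, because a perfect matching uses at most one edge between any pair of vertices, and we can always choose the copy that survives in $\widetilde G$. The vertex set is unchanged, so the requirement of at least four vertices is preserved. The one subtlety is that a brick must be matching covered, hence connected. Connectivity is clearly preserved. Every edge of $\widetilde G$ is an edge of $G$, and conversely every edge of $G$ has a parallel copy in $\widetilde G$, which lies in the corresponding perfect matching. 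So $G$ is matching covered if and only if $\widetilde G$ is. I therefore expect the equivalence to be routine once it is phrased in terms of $3$-connectivity and bicriticality.

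For the second part, let $v$ be a vertex of $G$, and let $e=vw$ be a forcing edge of $G$ incident with $v$. I claim that no other edge of $G$ joins $v$ and $w$. Otherwise, swapping $e$ for a parallel edge $e'$ in the unique perfect matching containing $e$ would require $e'$ to be in some perfect matching of $G$. More precisely, $G-v-w$ has a unique perfect matching $N$, so $N\cup\{e\}$ and $N\cup\{e'\}$ are two distinct perfect matchings of $G$. This is not a contradiction by itself, since $e$ still lies in exactly one of them. So I will use the definition via $G-v-w$ instead: $e$ is forcing if and only if $G-v-w$ has a unique perfect matching.

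The key step is then to check that $G-v-w$ has a unique perfect matching if and only if $\widetilde G-v-w$ does, where uniqueness is counted as edge sets in each graph. This fails if $G-v-w$ contains parallel edges used by its matching. So the main obstacle is the following. If $G-v-w$ has a unique perfect matching $N$, then no edge of $N$ has a parallel copy, since otherwise a swap gives a second matching. The perfect matchings of $\widetilde G-v-w$ are perfect matchings of $G-v-w$, so $\widetilde G-v-w$ has exactly one perfect matching, namely the image of $N$.

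Finally, the edge $vw$, or its representative in $\widetilde G$, is therefore forcing in $\widetilde G$ under the definition "$\widetilde G-v-w$ has a unique perfect matching". Since $v$ was arbitrary, $\widetilde G$ has property $\mathcal P$.
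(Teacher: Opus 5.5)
Your argument is correct. The paper states this proposition without proof, and yours is the natural verification. For the first part, $3$-connectivity, bicriticality and matching coveredness depend only on which pairs of vertices are adjacent. For the second part, a unique perfect matching $N$ of $G-v-w$ cannot use an edge that has a parallel copy, so $N$ survives in the spanning subgraph $\widetilde G-v-w$, and that subgraph has no other perfect matching.

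Two cleanups for a final write-up:
\begin{enumerate}[(i)]
\item Delete the opening claim that a forcing edge $vw$ has no parallel copy, rather than stating it and then retracting it. The claim is false in general: the spokes in Note~\ref{not:solid-multiple}(ii) stay forcing with any multiplicity.
\item State the key step as the single implication you need and prove, not as an equivalence, since, as you note yourself, the converse fails.
\end{enumerate}
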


	\subsection{Solid Bricks and Forcing Edges}

		 A brick is \emph{solid} if its separating cuts are trivial.
	A graph $G$ has \emph {odd cycle property} if for any two vertex-disjoint odd cycles $C_1$ and $C_2$ of $G$, the graph
	$G-V(C_1)-V(C_2)$ has no perfect matching.

\begin{The}\label{lem:robustcut}
	\cite{Carvalho2004,He2026}
	Every nonsolid brick $G$ has a robust cut $\partial(X)$ such that there exists a subset $X'$ of $X$ and a subset $X''$ of $\overline{X}$ such that
	$G / \overline{X'}$ is a solid brick,
	$G / \overline{X''}$ is a brick,
	and the graph $H$, obtained from $G$ by contracting $X'$ and $X''$ to single vertices
	$x'$ and $x''$, respectively, is bipartite and matching covered, where $x'$ and $x''$ lie in different parts of a bipartition of $H$.
\end{The}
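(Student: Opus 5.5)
The plan is to choose the robust cut $\partial(X)$ by an extremality condition and then read off $X'$ and $X''$ from the tight cut structure of the two near-brick $C$-contractions. First I would invoke the theorem of de Carvalho, Lucchesi and Murty that every nonsolid brick has a robust cut. Among all robust cuts $\partial(X)$ of $G$, I would take one for which the brick of the $C$-contraction $G/\overline X$ is as small as possible. The measure is the number of vertices of this unique brick, since $G/\overline X$ is a near-brick.

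Next I would analyse one near-brick contraction, say $J=G/(\overline X\to \overline x)$, with $b(J)=1$. In a tight cut decomposition of $J$ all pieces except one brick are braces, i.e.\ bipartite. Using Lov\'asz's uniqueness theorem, the brick of $J$ is obtained by contracting a single set. Concretely, I would take a maximal family of nontrivial tight cuts of $J$ and argue they can be chosen laminar and nested around the brick. The union of the shores not containing the brick is then a set $Y\subseteq V(J)$ such that $J/Y$ is the brick, $J/\overline Y$ is bipartite matching covered, and $\overline x\in Y$. I would set $X'=V(J)\setminus Y\subseteq X$, so that $G/\overline{X'}=J/Y$ is a brick. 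The same procedure applied to $G/(X\to x)$ gives $X''\subseteq\overline X$ with $G/\overline{X''}$ a brick.

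Then I would show that $H=(G/X')/X''$ is bipartite and matching covered, with $x'$ and $x''$ in different parts. Matching coverage should follow by combining perfect matchings across the tight cuts $\partial(X')$ and $\partial(X'')$, which are tight in $G/\overline X$ and $G/X$ respectively. This works because $\partial(X)$ is separating, so each perfect matching of a contraction extends. For bipartiteness, $H$ is obtained by splicing the two bipartite graphs $(G/\overline X)/X'$ and $(G/X)/X''$ at $\overline x$ and $x$. I need the colour classes to agree along $\partial(X)$. The step I would check carefully is that in the first graph $\overline x$ lies in the part opposite $x'$, and in the second $x$ lies opposite $x''$. This parity should come from tightness: in a bipartite matching covered graph, vertices in opposite parts are exactly those that can be simultaneously matched to a common edge structure. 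Gluing then places $x'$ and $x''$ in opposite parts.

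Finally, solidity of $G/\overline{X'}$ comes from the extremal choice. Suppose $G/\overline{X'}$ had a nontrivial separating cut. Then it has a robust cut $\partial(Z)$, and we may choose $Z\subseteq X'$. A separating cut of a tight-cut contraction lifts to a separating cut of $G$, so $\partial_G(Z)$ is separating and not tight in $G$. I would check it is robust, or pass to a robust cut it determines. Its brick side would be strictly smaller than that of $X'$, contradicting minimality. I expect this lifting-and-robustness step, together with the parity bookkeeping in the bipartiteness argument, to be the main obstacle. For the lifting I would first try the standard facts that separating cuts of $C$-contractions are separating in $G$, and that $b$ is additive over tight cuts.
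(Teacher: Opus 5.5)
\textbf{Main gap: the solidity step.} The paper does not prove this statement; it imports it from de Carvalho--Lucchesi--Murty and He--Lu--Xue. Judged on its own, your proposal has a genuine gap at the step you flagged, and it is more than bookkeeping.

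Let $G_1=G/\overline{X'}$ be nonsolid, and let $\partial_{G_1}(Z)$, with $Z\subseteq X'$, be a robust cut of $G_1$. The lift $\partial_G(Z)$ is indeed separating and nontight in $G$. (Note that $G_1$ is a tight-cut contraction of $J$, not of $G$; to pass through $\partial(X)$ you need the fact that splicing two matching covered graphs gives a matching covered graph.) However, the lift need not be robust, and $G$ need not have any robust cut with a shore inside $Z$ at all.

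For example, take the simple graph $G$ on $\{a_1,a_2,a_3,t,s,q,r,w\}$ with edges $a_1a_2,a_2a_3,a_1a_3,ts,tq,sq,qr,rw,qw,ta_1,sa_3,a_1r,a_2w$, and let $X=\{a_1,a_2,a_3,t,s\}$.
\begin{enumerate}[(i)]
\item $G/\overline X\cong\overline{C_6}^{+}$, and $G/X$ is $K_4$ with one doubled edge. By Lemma~\ref{thm:splicing-brick}, $G$ is a brick, and $\partial(X)$ is robust with $X'=X$.
\item $Z=\{a_1,a_2,a_3\}$ is a shore of a robust cut of $G/\overline{X'}$.
\item In $G/(Z\to z)$, the four edges between $\{z,t,s\}$ and $\{q,r,w\}$ are covered by $\{z,q\}$. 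So this is a nontrivial tight cut with a brick on each side, giving $b(G/Z)=2$. Hence $\partial_G(Z)$ is not robust.
\item Since $|Z|=3$, no robust cut of $G$ has a shore contained in $Z$.
\end{enumerate}
This does not contradict the theorem: the other orientation of $\partial(X)$, or the triangle $\{t,s,q\}$ (whose cut crosses $\partial(X)$), gives a $K_4$. But it shows that the strictly smaller brick your minimality argument needs cannot be obtained by lifting a robust cut of $G/\overline{X'}$. Your phrase ``or pass to a robust cut it determines'' hides a genuinely global argument, which is exactly the nontrivial content of the cited results.

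\textbf{Two earlier steps: true, but your justifications are not.} First, Lov\'asz's uniqueness theorem does not make the brick of a near-brick a single contraction $J/Y$. In general, several disjoint bipartite pieces can hang off the brick. What makes it true here is the following. Suppose $\partial_J(Z)$ is a nontrivial tight cut of $J$ with $\overline x\notin Z$, and $G/\overline Z$ is bipartite with colour classes $A\ni\overline z$ and $B$. Then every perfect matching $M$ of $G$ satisfies $|M\cap\partial(Z)|=|B|-|A\setminus\{\overline z\}|=1$. So $\partial_G(Z)$ would be a nontrivial tight cut of the brick $G$, which is impossible. Hence the bipartite side of every nontrivial tight cut of $J$ contains $\overline x$. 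Choosing such a shore $Y$ maximal, and applying the same argument to $J/Y=G/\overline{X'}$, shows that $J/Y$ is a brick.

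Second, $H$ is automatically bipartite. All neighbours of $\overline x$, and likewise of $x$, lie in a single colour class, so no matching of colour classes is needed. What does need proof is that $x'$ and $\overline x$ lie in different classes of $J/\overline Y$ (and similarly $x''$ and $x$). Suppose both lay in one class $A_1$, and let $B_1$ be the other class. Since $x'$ and $\overline x$ have all their neighbours in $B_1$, the graph $G-B_1$ contains the $|A_1|-2$ isolated vertices of $A_1\setminus\{\overline x,x'\}$, an odd component inside $X'$, and an odd component inside $\overline X$. So $B_1$ would be a barrier of $G$ with $|B_1|=|A_1|\ge 2$, contradicting bicriticality. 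Your remark about vertices being ``simultaneously matched to a common edge structure'' does not establish this.
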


		\begin{Lem}\label{lem:solid oddcycle}
				\cite{CLM2004,Lucchesi2024}
				For a brick $G$, it is solid if and only if it has odd cycle property.
			\end{Lem}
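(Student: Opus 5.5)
The plan is to prove the two implications separately. Throughout I use the standard characterization that an edge cut $C$ of a matching covered graph $G$ is separating if and only if every edge of $G$ lies in some perfect matching of $G$ meeting $C$ in exactly one edge. I also use that in a brick every nontrivial separating cut is non-tight, since a brick has no nontrivial tight cuts.

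\emph{Odd cycle property $\Rightarrow$ solid.} I argue by contraposition. Let $C=\partial(X)$ be a nontrivial separating cut of the brick $G$. Because $C$ is not tight, there is a perfect matching $M$ with $|M\cap C|=k\geq 3$. I first reduce to $k=3$. Let $M_0$ be a perfect matching with $|M_0\cap C|=1$, which exists because $C$ is separating. Then $M\triangle M_0$ is a union of alternating cycles, each meeting $C$ an even number of times, and switching $M$ along a suitable one of them should lower $|M\cap C|$ by an even amount without reaching $1$. Now fix $M$ with $M\cap C=\{a_1b_1,a_2b_2,a_3b_3\}$, where $a_i\in X$. Apply the characterization in $G_1=G/\overline X$ to get a perfect matching $N_1$ of $G_1$ using the image of $a_2b_2$. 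In $G[X]$, the symmetric difference $(M\cap E(G[X]))\triangle(N_1-a_2b_2)$ is a disjoint union of alternating paths and cycles in which $a_2$ is isolated. The only vertices of degree one are $a_1$ and $a_3$, so they are joined by an odd $M$-alternating path $P$ inside $X$. In the same way, using $G/X$, $b_1$ and $b_3$ are joined by an odd $M$-alternating path $Q$ inside $\overline X$. The goal is then to re-route $M$ along $P$ and $Q$ so that two vertex-disjoint odd cycles appear, one inside each shore, each closed by an $M$-edge incident with the corresponding end of the crossing edge $a_2b_2$, while the rest of the matching survives on their complement. The resulting odd cycles inside $X$ and inside $\overline X$, together with the remaining matching, contradict the odd cycle property.

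\emph{Solid $\Rightarrow$ odd cycle property.} Again by contraposition, let $C_1,C_2$ be vertex-disjoint odd cycles with $G-V(C_1)-V(C_2)$ having a perfect matching $M$. Among all sets $X\supseteq V(C_1)$ with $X\cap V(C_2)=\emptyset$ and $M\cap\partial(X)=\emptyset$, I choose $X$ to be minimal; for instance, take $X=V(C_1)$ together with the vertices reachable from $C_1$ by $M$-alternating paths avoiding $C_2$. Both shores are odd and both have at least three vertices, so the cut is nontrivial. To prove $\partial(X)$ separating, I use the characterization: for an edge $e$, complete $M$ by a near-perfect matching of each $C_i$ missing a suitably chosen vertex, joined by one cut edge. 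Bicriticity of $G$ (removing one vertex of $C_1$ and one of $C_2$), combined with alternating-path arguments, should give a perfect matching through $e$ that crosses the cut exactly once.

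I expect the main obstacle to be the re-routing step in the first implication, namely extracting two genuinely disjoint odd cycles with a matchable complement from the paths $P$ and $Q$. If the direct construction fails, I would instead choose the triple $(M,N_1,N_2)$ so as to minimize $|M\triangle N_1|+|M\triangle N_2|$ and argue by minimality.
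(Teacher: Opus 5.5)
The paper does not prove this lemma; it quotes it from de Carvalho, Lucchesi and Murty. Your sketch therefore has to stand on its own, and in both directions the decisive step is missing.

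\textbf{Odd cycle property $\Rightarrow$ solid.} Your reduction to $|M\cap C|=3$ is unjustified. $M\triangle M_0$ may be a single alternating cycle, and then every switch lands on $|M\cap C|=1$.

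More seriously, no odd cycle can arise from your data. Inside $X$ the edges of $M$ and $N_1$ form disjoint even cycles and paths. The union $P\cup Q\cup\{a_1b_1,a_3b_3\}$ is an even $M$-alternating cycle, and flipping it merely gives a perfect matching with one edge in $C$. Also, $a_2$ has no $M$-edge inside $X$ at all.

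The missing idea is non-bipartiteness:
\begin{enumerate}[(i)]
\item Because $C$ is not tight, neither $C$-contraction is bipartite. If $G_1=G/(\overline X\to\overline x)$ were bipartite, every edge of $C$ would end, inside $X$, in the colour class not containing $\overline x$, and counting gives $|M\cap C|=1$ for every perfect matching $M$.
\item Take a perfect matching of $G_1$ containing an edge $\overline x u$, and grow an alternating tree from $u$ in $G[X]$.
\item If an edge joins two outer vertices, it yields a blossom $Q_1$. Flipping its stem shows that $G[X]-V(Q_1)$ has a perfect matching.
\item If no such edge appears, the inner vertices together with $\overline x$ form a barrier $B$ of $G_1$ such that $G_1-B$ has $|B|$ isolated vertices. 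Since $G_1$ is matching covered, this forces $G_1$ to be bipartite, contradicting (i).
\item The same argument in $\overline X$ gives $Q_2$, and the two matchings combine into a perfect matching of $G-V(Q_1)-V(Q_2)$.
\end{enumerate}

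\textbf{Solid $\Rightarrow$ odd cycle property.} Your choice of $X$ is ambiguous: the minimal set with your three properties is just $V(C_1)$, not the alternating closure. Neither choice yields a separating cut in general.

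Consider the brick with triangles $c_1c_2c_3$ and $d_1d_2d_3$, two further vertices $y,y'$, and the edges $c_1y$, $yy'$, $yd_3$, $y'd_1$, $y'd_2$, $c_2d_2$, $c_3d_3$. Let $M=\{yy'\}$.
\begin{enumerate}[(i)]
\item If $C_1=c_1c_2c_3$, the closure is $V(G)\setminus\{d_1,d_2,d_3\}$.
\item If $C_1=d_1d_2d_3$, the minimal set is $\{d_1,d_2,d_3\}$.
\end{enumerate}
Both give the cut $\partial(\{d_1,d_2,d_3\})$. In $G/\{d_1,d_2,d_3\}$ the edge joining $y$ to the contraction vertex lies in no perfect matching, because deleting its ends isolates $y'$. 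Bicriticity gives no control over how many edges cross a given cut, so your vague completion step cannot fix this.

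A route that works is polyhedral:
\begin{enumerate}[(i)]
\item The vector $x=\chi^M+\tfrac12\chi^{E(C_1)}+\tfrac12\chi^{E(C_2)}$ is non-negative and satisfies all degree constraints, but $x(\partial(V(C_1)))=0$. Hence the perfect matching polytope of $G$ is strictly smaller than $\{x\ge 0: x(\partial(v))=1 \text{ for all } v\}$.
\item For a brick the degree equations span the affine hull of the perfect matching polytope. So some facet is induced by an inequality $x(\partial(S))\ge1$ with $S$ nontrivial, and by no inequality $x_e\ge 0$.
\item Then every edge $e$ lies in a perfect matching meeting $\partial(S)$ exactly once; otherwise the facet would lie in, hence equal, the face $x_e=0$.
\item So $\partial(S)$ is a nontrivial separating cut, and $G$ is not solid.
\end{enumerate}
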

		The odd cycle property is inherited by subgraphs whose complements have perfect matchings.

	\begin{Lem}\label{lem:inherited oddcycle}
		Let $G$ be a graph with odd cycle property.
		If $H$ is a subgraph of $G$ such that $G-V(H)$ has a perfect matching, then $H$ has odd cycle property.
	\end{Lem}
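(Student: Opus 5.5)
We argue directly from the definition of the odd cycle property. Let $C_1$ and $C_2$ be two vertex-disjoint odd cycles of $H$, and suppose, to the contrary, that $H-V(C_1)-V(C_2)$ has a perfect matching $M_1$. By hypothesis, $G-V(H)$ has a perfect matching $M_2$. The plan is to glue $M_1$ and $M_2$ into a perfect matching of $G-V(C_1)-V(C_2)$.

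First, $C_1$ and $C_2$ are subgraphs of $H$, and hence of $G$, so they are two vertex-disjoint odd cycles of $G$. Next, the vertex set of $G-V(C_1)-V(C_2)$ is the disjoint union of $V(H)\setminus(V(C_1)\cup V(C_2))$ and $V(G)\setminus V(H)$. The matching $M_1$ covers exactly the first set, using edges of $H\subseteq G$. The matching $M_2$ covers exactly the second set, using edges of $G$. Because the two vertex sets are disjoint, no vertex is covered by both matchings. Therefore $M_1\cup M_2$ is a perfect matching of $G-V(C_1)-V(C_2)$.

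This contradicts the assumption that $G$ has the odd cycle property. Hence $H-V(C_1)-V(C_2)$ has no perfect matching, and $H$ has the odd cycle property.

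The only point needing care is that "subgraph" is used in the ordinary sense, so that the edges of $C_i$ and of $M_1$ are genuinely edges of $G$; nothing else in the argument requires more than the observation above. I do not expect a real obstacle: the statement is essentially immediate once the vertex partition is written down.
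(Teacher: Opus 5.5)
Your proposal is correct and follows essentially the same argument as the paper. You glue a perfect matching of $H-V(C_1)-V(C_2)$ to a perfect matching of $G-V(H)$ to obtain a perfect matching of $G-V(C_1)-V(C_2)$, which contradicts the odd cycle property of $G$.
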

	\begin{proof}
		Suppose that $H$ contains two vertex-disjoint odd cycles $C_1$ and $C_2$ such that
		$H-V(C_1)-V(C_2)$ has a perfect matching $M_H$.
		Let $M_0$ be a perfect matching of $G-V(H)$.
		Then $M_H\cup M_0$ is a perfect matching of $G-V(C_1)-V(C_2)$, contradicting the hypothesis on $G$.
	\end{proof}

       The next two results mainly concern the existence of 1-degree vertices; this fact will be crucial in the proof of the main structural conclusion.
	\begin{Lem}\label{lem:unique pm}
		\cite{Zhang2022}
		Let $G$ be a graph with a unique perfect matching.
		If $G$ has odd cycle property, then $G$ contains a 1-degree vertex.
	\end{Lem}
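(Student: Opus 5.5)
We argue by contradiction with a longest alternating path, in the spirit of the classical proof that a graph with a unique perfect matching has a vertex of degree one. Let $M$ be the unique perfect matching of $G$ and suppose that every vertex has degree at least $2$. The only multiple edges that matter are parallel copies of an $M$-edge. Such a copy gives a second perfect matching only by swapping, which changes nothing in the simple setting. So it is enough to argue in $\widetilde G$: every vertex there still has degree at least $2$ unless it is joined to its $M$-mate only. That last case is exactly a vertex of degree one in the required sense, which we may take to be the intended meaning. Uniqueness of $M$ is equivalent to $G$ having no $M$-alternating cycle. This is the property we shall contradict, together with the odd cycle property of Lemma~\ref{lem:inherited oddcycle}'s hypothesis.

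\textbf{Step 1: build the path.} Take a longest $M$-alternating path $P=v_1v_2\cdots v_{2k}$ whose first and last edges $v_1v_2$ and $v_{2k-1}v_{2k}$ lie in $M$. Such a path exists because a single $M$-edge qualifies.

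\textbf{Step 2: locate the end neighbours.} Since $d(v_{2k})\ge 2$, the vertex $v_{2k}$ has a neighbour $y\neq v_{2k-1}$. If $y\notin V(P)$, then $P+v_{2k}y+yy'$, where $y'$ is the $M$-mate of $y$, is a longer path of the same type, which is impossible. So $y=v_j$ for some $j<2k-1$.
\begin{itemize}
\item If $j$ is odd, the cycle $v_jv_{j+1}\cdots v_{2k}v_j$ is $M$-alternating and even, contradicting the uniqueness of $M$.
\item Hence $j$ is even, and $D_2=v_jv_{j+1}\cdots v_{2k}v_j$ is an odd cycle. Every vertex of $D_2$ except $v_j$ is $M$-matched inside $D_2$.
\end{itemize}
Applying the same argument at $v_1$, by symmetry of the path, gives a neighbour $v_i$ of $v_1$ with $i$ odd, $i\ge 3$. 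This yields an odd cycle $D_1=v_1v_2\cdots v_iv_1$. Every vertex of $D_1$ except $v_i$ is matched within $D_1$.

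\textbf{Step 3: the disjoint case.} Suppose $i<j$, so that $D_1$ and $D_2$ are vertex-disjoint. The vertices of $G-V(D_1)-V(D_2)$ are of three kinds:
\begin{itemize}
\item the path vertices $v_{i+1},\dots,v_{j-1}$;
\item the off-path vertices, which are matched among themselves by $M$;
\item the two exceptional vertices $v_i$ and $v_j$, whose $M$-mates $v_{i+1}$ and $v_{j-1}$ lie on the middle segment.
\end{itemize}
Since $v_i$ and $v_j$ belong to $D_1$ and $D_2$ respectively, they are removed. The segment $v_{i+1}\cdots v_{j-1}$ is an $M$-alternating path whose end edges $v_{i+1}v_{i+2}$ and $v_{j-2}v_{j-1}$ are non-matching. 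Its interior vertices $v_{i+2},\dots,v_{j-2}$ are $M$-matched among themselves, and there is an even number of them. Moreover $v_{i+1}v_{j-1}$ can be handled by the non-$M$ edges of the segment: $v_{i+1}\cdots v_{j-1}$ has an even number of vertices, because $i+1$ is even and $j-1$ is odd. So it has the perfect matching consisting of its non-$M$ edges. Together with $M$ on the off-path vertices, this gives a perfect matching of $G-V(D_1)-V(D_2)$. This contradicts the odd cycle property.

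\textbf{Step 4: the overlap case $i>j$.} This is where I expect the real work. Here $v_1$ is adjacent to $v_i$ and $v_{2k}$ is adjacent to $v_j$ with $j<i$. Consider the cycle
\[
v_1v_2\cdots v_j\,v_{2k}v_{2k-1}\cdots v_i\,v_1 .
\]
It uses the edges $v_jv_{2k}$ and $v_iv_1$ and skips the segment strictly between $v_j$ and $v_i$. Check the parities: $j$ is even, so $v_{j-1}v_j\in M$; $v_{2k}v_{2k-1}\in M$; $i$ is odd, so $v_iv_{i+1}\in M$; and $v_1v_2\in M$. Hence this cycle alternates. The jump edges $v_jv_{2k}$ and $v_iv_1$ are non-matching, and each sits between two $M$-edges. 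So it is an even $M$-alternating cycle, contradicting uniqueness.

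I would verify this parity bookkeeping carefully, including the degenerate subcases $j=i\pm1$. If some subcase fails, I would instead choose $j$ minimal and $i$ maximal among the admissible neighbours and redo the argument. Either way, both cases are impossible, so $G$ has a vertex of degree one.
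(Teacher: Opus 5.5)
Your proof is correct. It also supplies something the paper does not: the paper gives no argument for this lemma and simply quotes it from \cite{Zhang2022}.

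The closest argument in the paper is the proof of Lemma~\ref{lem:forcing edge}. It uses the same device: a maximal $M$-alternating path whose far end can only close an odd cycle, since uniqueness of $M$ forbids even $M$-alternating cycles. That odd cycle is then played against a second, disjoint odd cycle. You run this device at both ends of a longest path and split according to whether the chords $v_1v_i$ and $v_jv_{2k}$ are disjoint or interleaved.

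Your Step~4 bookkeeping is right as written, so no fallback choice of $i,j$ is needed:
\begin{itemize}
\item The cycle has $j+(2k-i+1)$ vertices, an even number because $j$ is even and $i$ is odd.
\item Each of $v_1,v_j,v_i,v_{2k}$ meets exactly one $M$-edge of the cycle.
\item The subcase $j=i-1$ is harmless, and $j=i+1$ falls under Step~3 with an empty middle segment.
\end{itemize}

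A useful by-product of your route is that it shows exactly where the odd cycle property is used. Step~4 does not need it; only the disjoint case of Step~3 does. It cannot be dropped there: two triangles joined by an edge have a unique perfect matching and minimum degree $2$.

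Two presentational fixes:
\begin{itemize}
\item The opening discussion of multiple edges is muddled. All you need is that a parallel copy of an $M$-edge would yield a second perfect matching. Hence every $M$-edge is simple, and any vertex of degree at least $2$ has a neighbour other than its $M$-mate. There is no need to pass to $\widetilde G$ or to reinterpret ``degree one''.
\item In Step~3 it suffices to say two things. The non-$M$ edges $v_{i+1}v_{i+2},\dots,v_{j-2}v_{j-1}$ perfectly match the middle segment. Also, $M\setminus E(P)$ perfectly matches $V(G)\setminus V(P)$, because $M$ matches $V(P)$ to itself. The same fact gives $y'\notin V(P)$ in Step~2, which you use implicitly there.
\end{itemize}
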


	\begin{Lem}\label{lem:forcing edge}
		Let $uv$ be a forcing edge of a solid brick $G$. Then $G-u-v$ has at least two 1-degree vertices, which are adjacent to both $u$ and $v$.
	\end{Lem}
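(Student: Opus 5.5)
Set $H=G-u-v$. Since $uv$ is forcing, $H$ has a unique perfect matching, and the plan is to apply Lemma~\ref{lem:unique pm} twice to produce two distinct $1$-degree vertices of $H$. Afterwards we use the $3$-connectivity of $G$ to see where they attach.

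\textbf{First vertex.} The brick $G$ is solid, so by Lemma~\ref{lem:solid oddcycle} it has the odd cycle property. The complement $G-V(H)$ is the single edge $uv$, which is a perfect matching of the subgraph $G[\{u,v\}]$. Hence Lemma~\ref{lem:inherited oddcycle} shows that $H$ has the odd cycle property. By Lemma~\ref{lem:unique pm}, $H$ has a vertex $a$ with $d_H(a)=1$. Let $ab$ be its unique edge in $H$; this edge must lie in the unique perfect matching $M$ of $H$.

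\textbf{Second vertex.} Consider $H'=H-a-b$. It has the unique perfect matching $M\setminus\{ab\}$. Moreover $G-V(H')$ has the perfect matching $\{uv,ab\}$, so $H'$ again has the odd cycle property by Lemma~\ref{lem:inherited oddcycle}. This gives a $1$-degree vertex $a'$ of $H'$, provided $H'$ is nonempty. This holds because $|V(G)|\ge 6$: a solid brick on $4$ vertices is $K_4$, and there $H$ is a single edge whose two ends are both of degree $1$, so the claim can be checked directly.

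The main obstacle is that $a'$ has degree $1$ in $H'$, not necessarily in $H$. I would handle this by choosing $a$ more carefully. Orient the uniqueness argument: in a graph with a unique perfect matching $M$ there is no $M$-alternating cycle. Consider the vertices of degree $1$ in $H$. If $H$ had only one such vertex $a$, then deleting $a,b$ and repeating the argument gives a sequence of peeled edges. In that situation I would try to build an $M$-alternating cycle through $b$ in $H$, or two disjoint odd cycles in $G$ whose complement is matchable, contradicting the odd cycle property.

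A cleaner alternative I would try first is symmetric: apply the argument to $H$ but also use that $b$ has degree at least $3$ in $G$. Concretely, show that if $a$ is the only $1$-degree vertex, then $b$ together with $u$ and $v$ gives $H$ a structure in which $\{b\}$ or $\{u,v,b\}$ separates. Either way, this step — forcing a second $1$-degree vertex — is where I expect the real work.

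\textbf{Adjacency.} Once $a_1,a_2$ with $d_H(a_i)=1$ are found, use that $G$ is $3$-connected. Then $d_G(a_i)\ge 3$ with at least three distinct neighbours, since a brick is $3$-connected and the underlying simple graph matters. The vertex $a_i$ has only one neighbour in $H$, so its other neighbours lie in $\{u,v\}$. Hence $a_i$ must be adjacent to both $u$ and $v$, completing the proof.
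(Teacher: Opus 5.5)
Your first step (the odd cycle property passes to $H=G-u-v$, so $H$ has a vertex $a$ of degree $1$) and your final adjacency step are fine. The proof of the \emph{second} $1$-degree vertex is missing, and the routes you sketch cannot supply it. Peeling $ab$ and applying Lemma~\ref{lem:unique pm} to $H-a-b$ only gives vertices of degree $1$ in smaller graphs. More fundamentally, no argument that stays inside $H$ can work: a triangle $bcd$ with a pendant edge $ab$ has a unique perfect matching and (vacuously) the odd cycle property, yet only one vertex of degree $1$. Your separation alternative also gives nothing. The set $\{b\}$ trivially separates the leaf $a$ from the rest of $H$, and a $3$-vertex separator $\{u,v,b\}$ is not forbidden in a $3$-connected graph.

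The missing idea is to use the odd cycle property of $G$ itself, with the triangle $C_1=auva$ as one of the two odd cycles. This triangle exists as soon as you apply your adjacency argument to $a$, which you can do right after finding it.

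Take a maximal $M$-alternating path $P=v_0v_1\cdots v_{2k+1}$ in $H$ with $v_0=a$ and $v_0v_1=ab\in M$. Maximality forces $P$ to end with an $M$-edge, since otherwise the $M$-partner of the last vertex lies off $P$ and extends it. Suppose the end $y=v_{2k+1}$ does not have degree $1$ in $H$. Then it has a non-$M$ edge $yv_j$, and by maximality $v_j$ lies on $P$. If $j$ is even, the cycle $v_jv_{j+1}\cdots v_{2k+1}v_j$ is $M$-alternating, contradicting the uniqueness of $M$. So $j$ is odd, and $C_2=v_jv_{j+1}\cdots v_{2k+1}v_j$ is an odd cycle avoiding $a$, $u$ and $v$. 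Moreover, $\{v_1v_2,\dots,v_{j-2}v_{j-1}\}\cup(M\setminus E(P))$ is a perfect matching of $G-V(C_1)-V(C_2)$, contradicting Lemma~\ref{lem:solid oddcycle}. Hence $y\neq a$ is a second vertex of degree $1$ in $H$.

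This is exactly the paper's argument. Without it, or some substitute that brings a cycle through $u$ and $v$ into play, your proposal establishes only one $1$-degree vertex.
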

	\begin{proof}
		Set $H=G-u-v$.
		Since $uv$ is forcing, $H$ has a unique perfect matching $M$.
		By Lemma~\ref{lem:solid oddcycle}, $G$ has odd cycle property.
		By Lemma~\ref{lem:inherited oddcycle}, $H$ satisfies the hypothesis of Lemma~\ref{lem:unique pm}; hence $H$ has a 1-degree vertex $x$.
		As $G$ is a brick, it is 3-connected. So $x$ is adjacent to both $u$ and $v$ in $G$.
		Suppose that $x$ is the only 1-degree vertex of $H$.
		Let $P$ be a maximal $M$-alternating path in $H$ starting with the unique edge of $M$ incident with $x$.
		Then $P$ ends with an edge of $M$; let $y$ be the end of $P$, other than $x$.
		Since $y$ is not a 1-degree vertex in $H$, the maximality of $P$ implies that the  end of each edge in $\partial(y)\setminus M $ other than $y$, lies on  $P$. Note that each edge in $\partial(y)\setminus M $ together with edges in $P$
		  cannot form any even $M$-alternating cycles, because $M$ is the unique perfect matching of $H$; let $e\in\partial(y)\setminus M$ such that $e$ forms some odd cycle $C_2$ with a subpath of $P$ and
		the triangle $C_1=xuvx$ is disjoint from $C_2$. Therefore,  $G-V(C_1)-V(C_2)$ has a perfect matching $(M \setminus E(P))\cup ((E(P)\setminus E(C_2))\setminus M)$, contradicting Lemma~\ref{lem:solid oddcycle}.
		Therefore $H$ has a 1-degree vertex other than $x$, say $z$. Again, by the 3-connectedness of $G$, $z$ is adjacent to both $u$ and $v$.
		\end{proof}

	The following observation follows directly from Lemma~\ref{lem:forcing edge}.
	\begin{Cor}\label{cor:observation}
	Let $G$ be a solid brick, and let $ux$ be a forcing edge with
	$d_G(x)=3$ and $N_G(x)=\{u,x^-,x^+\}$. Then
	$d_G(x^-)=d_G(x^+)=3$,
	$\{ux^-,ux^+\}\subseteq E(G)$, and
	the set of all $1$-degree vertices of $G-u-x$ is
		$\{x^-,x^+\}$.
	\end{Cor}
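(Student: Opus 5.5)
We apply Lemma~\ref{lem:forcing edge} directly to the forcing edge $ux$. Set $H=G-u-x$. By that lemma, $H$ has at least two $1$-degree vertices, and each of them is adjacent in $G$ to both $u$ and $x$. Every such vertex is therefore a neighbour of $x$ different from $u$. Since $N_G(x)=\{u,x^-,x^+\}$, the only candidates are $x^-$ and $x^+$. As there are at least two $1$-degree vertices, both $x^-$ and $x^+$ must be $1$-degree vertices of $H$, and there are no others. This gives the third assertion, and also $\{ux^-,ux^+\}\subseteq E(G)$, because each of $x^-,x^+$ is adjacent to $u$.

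It remains to show $d_G(x^-)=d_G(x^+)=3$. Consider $x^-$; the argument for $x^+$ is symmetric. Since $d_H(x^-)=1$, exactly one edge of $H$ is incident with $x^-$. Every other edge of $G$ at $x^-$ goes to $u$ or to $x$.

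The main point is to rule out parallel edges. The vertex $x$ has degree $3$ with three distinct neighbours, so exactly one edge joins $x$ and $x^-$. Suppose $G$ had two edges joining $u$ and $x^-$. In the unique perfect matching $M$ of $H$, the vertex $x^-$ is matched along its unique $H$-edge. Swapping multiple edges does not directly give a second perfect matching of $G$ containing $ux$, so this does not immediately contradict forcing. I therefore plan to argue through $\widetilde G$ instead: Proposition~\ref{pro:underlying-simple} shows that $\widetilde G$ is still a brick, solidity is preserved because separating cuts depend only on the underlying graph, and forcing is preserved as well. Degrees in the corollary are then understood in the simple setting in which the paper applies it.

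Granting simplicity, $x^-$ has exactly one edge to each of $u$, $x$, and its unique $H$-neighbour. Hence $d_G(x^-)=3$, which is also the minimum permitted by $3$-connectivity. The only subtle step is this multiplicity issue; the rest is immediate from Lemma~\ref{lem:forcing edge}.
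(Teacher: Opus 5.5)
Your argument is the paper's argument: the corollary follows directly from Lemma~\ref{lem:forcing edge}. Every $1$-degree vertex of $G-u-x$ is a neighbour of $x$ other than $u$, which pins the set down to $\{x^-,x^+\}$ and gives $ux^-,ux^+\in E(G)$.

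Your worry about the degree claim is justified, but the detour through $\widetilde G$ only yields $d_{\widetilde G}(x^\pm)=3$, not $d_G(x^\pm)=3$. In fact the literal claim fails for multigraphs. Take $K_4$ on $\{u,x,x^-,x^+\}$ with $ux^-$ doubled: this is a solid brick and $ux$ is forcing, yet $d_G(x^-)=4$.

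The clean fix is to assume that $G$ is simple, or just that $ux^\pm$ have multiplicity one. This is the only setting in which the paper uses the corollary (the proof of Lemma~\ref{lem:oddwheel}). With that hypothesis, your count of one edge each to $u$, to $x$, and to the unique neighbour in $G-u-x$ completes the proof.
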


	\subsection{Odd Wheels}

	We first study how forcing edges restrict the structure of solid bricks. Then we study how splicing affects the property $\mathcal P$. 
		\begin{Lem}\label{lem:oddwheel}
			Let $G$ be a simple solid brick. If at most one vertex of $G$ is not incident with a forcing edge, then $G$ is an odd wheel.
		\end{Lem}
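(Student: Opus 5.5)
The plan is to locate a vertex of high degree that plays the role of the hub, and then to show that every other vertex is a rim vertex of degree $3$ forming an odd cycle around it. The main tool is Lemma~\ref{lem:forcing edge}. For every forcing edge $uv$ of $G$ there are two distinct vertices $a,b\notin\{u,v\}$, each adjacent to both $u$ and $v$. Moreover $a$ and $b$ have degree $1$ in $G-u-v$, so by $3$-connectivity $d_G(a)=d_G(b)=3$ and $N_G(a)=\{u,v,a'\}$, where $a'$ is the partner of $a$ in the unique perfect matching of $G-u-v$. Thus every forcing edge lies in at least two triangles whose third vertices have degree $3$.

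First I would fix a degree-$3$ vertex $x$. It exists because $G$ has at least one forcing edge: at least $|V(G)|-1\ge 3$ vertices are covered by forcing edges. Write $N(x)=\{x_1,x_2,x_3\}$. The vertices of degree $3$ produced above are the heart of the argument. Suppose $x$ is incident with a forcing edge, say $xu$. Corollary~\ref{cor:observation} then gives that both other neighbours $x^-,x^+$ of $x$ have degree $3$ and are adjacent to $u$. We also know that $x^-$ and $x^+$ are exactly the degree-$1$ vertices of $G-u-x$.

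I would then iterate this. Apply the same reasoning at $x^+$, provided $x^+$ is incident with a forcing edge, and aim to show that this forcing edge must again be $x^+u$. If instead it were $x^+x$ or $x^+w$ for the third neighbour $w$, Corollary~\ref{cor:observation} would force extra adjacencies or extra triangles. These would yield two vertex-disjoint odd cycles whose complement is matchable, contradicting the odd cycle property (Lemma~\ref{lem:solid oddcycle}). Alternatively they would produce a $2$-vertex cut, contradicting $3$-connectivity. The outcome of the iteration is a sequence $x=y_0,y_1,y_2,\dots$ of degree-$3$ vertices, each adjacent to $u$ and to its two neighbours in the sequence. The sequence closes into a cycle $R$, and $3$-connectivity implies $V(G)=V(R)\cup\{u\}$. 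Hence $G$ is a wheel with hub $u$. The rim is odd, because an even wheel is bipartite-like and not bicritical: removing the hub and one rim vertex leaves an odd path, which has no perfect matching.

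The exceptional vertex, the one not incident with a forcing edge, needs care. The propagation may stop at that vertex, so I would choose the starting point and the propagation direction to avoid it. Since at most one vertex is exceptional, I would propagate in both directions from $x$ along the rim. Each rim vertex other than the exceptional one propagates, so the two directions meet past the exceptional vertex $z$. The structure around $z$ is then already forced by its two rim neighbours, each of which has degree $3$ with neighbourhood containing $z$ and $u$. If the exceptional vertex is $u$ itself, every rim vertex still propagates, so nothing changes.

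I expect the main obstacle to be the local case analysis that shows the forcing edge at each new degree-$3$ vertex goes to the same hub $u$. In particular one must exclude the case where the forcing edge at $x^+$ is its rim edge to $x$ or to its third neighbour. My first attempt would be to show that in that case Lemma~\ref{lem:forcing edge} forces a $K_4$-like configuration on $\{x,x^-,x^+,u\}$. Then either $G=K_4$, which is the odd wheel $W_3$, or $\{u,\text{third neighbour}\}$ is a $2$-cut, which is impossible.
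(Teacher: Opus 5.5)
Your proposal is correct and follows essentially the same route as the paper. A forcing edge $xu$ at a non-exceptional degree-$3$ vertex fixes the hub $u$, Corollary~\ref{cor:observation} propagates degree-$3$ spokes along the rim, and the exceptional vertex is handled by propagating from both sides until the two ends meet at it. The only real difference is how you exclude a rim forcing edge at a new rim vertex: the paper simply notes that Corollary~\ref{cor:observation} would then force $d_G(u)=3$, although $u$ already has four distinct neighbours, which is cleaner than your triangle/cut-vertex argument. Two minor points: the odd cycle property is not needed at this step, and $u$ can never be the exceptional vertex, since $xu$ is forcing.
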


	\begin{proof}

	If there exists a vertex that is not incident with any forcing edge, denote the vertex by $z$.
	    Choose a forcing edge $e=uv$ of $G$, where $\{u,v\}\subseteq V(G)\setminus \{z\}$. By Lemma~\ref{lem:forcing edge}, $G-u-v$ has two $1$-degree vertices, say $x_1$ and $y_1$, and each is adjacent to both $u$ and $v$. Thus $d_G(x_1)=d_G(y_1)=3$.

	At least one of $x_1$ and $y_1$ is not the exceptional vertex $z$; without loss of generality, assume that $x_1\neq z$.
	Let $h_1$ be a forcing edge incident with $x_1$, and let $x_2\in N_G(x_1)\setminus \{u,v\}$
	and $y_2\in N_G(y_1)\setminus\{u,v\}$. If $x_2 = y_1$, then $x_1=y_2$. Hence the edge $x_1y_1$ forms a component of $G-u-v$. Since $G$ is 3-connected, $G-u-v$ is connected.
      If there is a vertex other than $x_1$ and $y_1$ in $G-u-v$, then it would be adjacent to $x_1$ or $y_1$, contradicting that $N_G(x_1)=\{u,v,y_1\}$ and $N_G(y_1)=\{u,v,x_1\}$. Therefore $V(G)=\{u,v,x_1,y_1\}$ and $|V(G)|=4$.
	Since $G$ is a brick, $G\cong K_4$.

	 Next assume that $x_2 \neq y_1$. If $h_1=x_1x_2$, then, applying Corollary~\ref{cor:observation} to the forcing edge $x_1x_2$, we obtain $d_G(u)=d_G(v)=3$ and $\{x_2u,x_2v\}\subseteq E(G)$.
	However, $x_2,x_1,v$ and $y_1$ are four distinct neighbors of $u$, contradicting $d_G(u)=3$.
	So $h_1=ux_1$ or $h_1=vx_1$. By the symmetry of $u$ and $v$, assume that $h_1=ux_1$.

	Applying Corollary~\ref{cor:observation} to the forcing edge $ux_1$, the vertices $v$ and $x_2$ have degree three and are adjacent to $u$. We now extend the path from $x_1$ through $x_2$ as follows. Suppose that $x_i$ has already been chosen, where $i\geq 2$, and let
     $N_G(x_i)=\{u,x_{i-1},x_{i+1}\}$.
	If $x_i\neq z$, then $x_i$ is incident with a forcing edge. This forcing edge cannot be $x_{i-1}x_i$ or $x_ix_{i+1}$, for otherwise Corollary~\ref{cor:observation} would imply $d_G(u)=3$, while $u$ is adjacent to the four distinct vertices $v,x_1,y_1$ and $x_2$, a contradiction. Hence $ux_i$ is a forcing edge. Applying Corollary~\ref{cor:observation} again, we obtain that $x_{i+1}$ has degree three and is adjacent to $u$.

	Continuing this process, we obtain a maximal sequence
	$
		x_1,x_2,\ldots,x_t
	$
	such that $ux_i$ is a forcing edge and $d_G(x_i)=3$ for $1\leq i<t$, $x_ix_{i+1}\in E(G)$ for $1\leq i<t$, and each $x_i$ is adjacent to $u$. If $x_t\neq z$, then $ux_t$ is also a forcing edge and $d_G(x_t)=3$. If $x_t=z$, then $d_G(z)=3$ and $uz\in E(G)$ by Corollary~\ref{cor:observation} applied to the forcing edge $ux_{t-1}$. By maximality, the sequence stops only when the next vertex already belongs to $\{v,x_1,\ldots,x_t\}$, or when the exceptional vertex $z$ exists and $x_t=z$.

	If the sequence closes before reaching $z$ (or if no such vertex $z$ exists), then the closed sequence gives a cycle; in particular, when it stops at $v$ (i.e., $x_t=y_1$), the cycle is
	$
		vx_1x_2\cdots x_t v
	$
	and, together with the vertex $u$, forms a wheel. Since every rim vertex has degree three, every neighbor of a rim vertex lies in the wheel. As $G$ is 3-connected, this wheel is the whole graph. Thus $G$ is a wheel. Since a brick has an even number of vertices, the rim has odd length, and so $G$ is an odd wheel, see Figure~\ref{fig:lemma12}(a).

	\begin{figure}[htbp]
		\centering
		\includegraphics[width=0.85\textwidth]{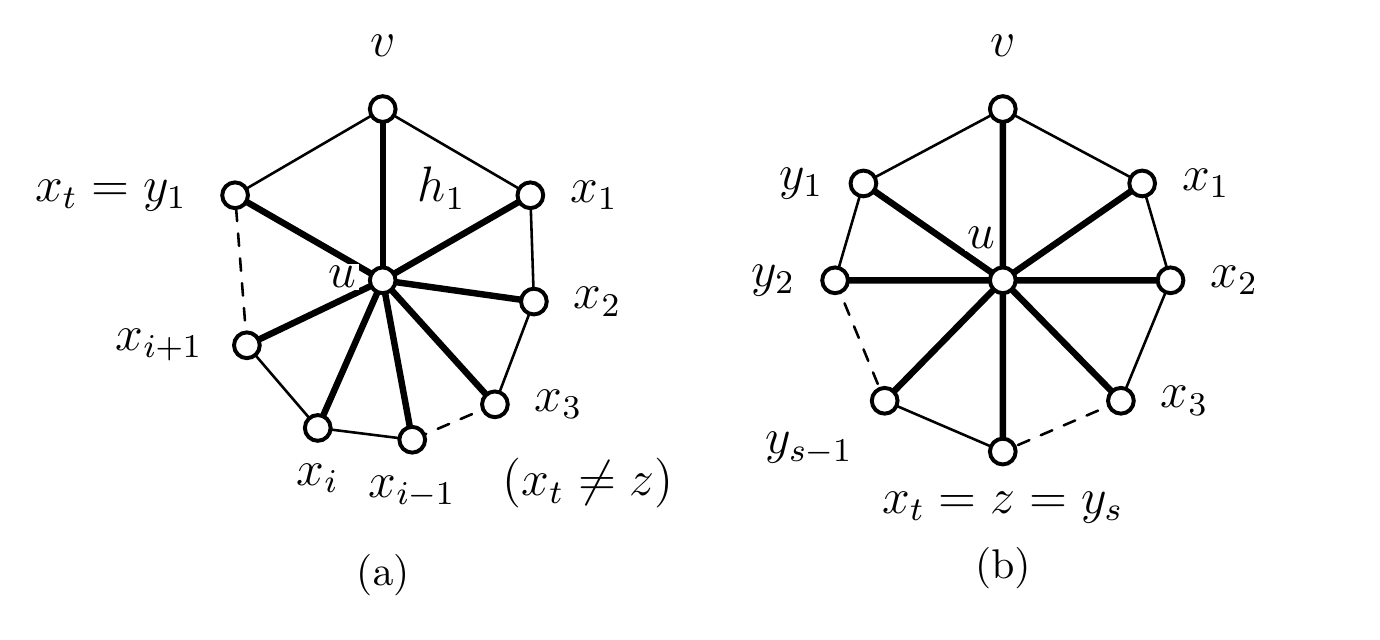}
		\caption{Illustration for the proof of
			Lemma~\ref{lem:oddwheel}: the thick edges are forcing edges.}
		\label{fig:lemma12}
	\end{figure}

	It remains to consider the case in which the sequence reaches $z$, i.e., $x_t=z$. If $y_1=z$, then
	$
		vx_1x_2\cdots x_t v
	$
	is a  rim with hub $u$, and the same argument as in the previous paragraph shows that $G$ is an odd wheel. Thus assume that $y_1\neq z$. Repeating the preceding construction starting from $y_1$, we obtain a maximal sequence
	$
		y_1,y_2,\ldots,y_s
	$
	of 3-degree vertices adjacent to $u$. If this sequence stops at $v$, then we again obtain a wheel with hub $u$. Otherwise it reaches $z$, and
	$
		vx_1x_2\cdots x_t y_{s-1}\cdots y_2y_1v
	$
	is a  rim with hub $u$, where $y_s=z=x_t$, see Figure~\ref{fig:lemma12}(b). As above, all rim vertices have degree three, so this wheel is the whole graph. Since $G$ has an even number of vertices, the rim has an odd number of vertices. Therefore $G$ is an odd wheel.
	\end{proof}


\begin{Not}\label{not:solid-multiple}
	Let $G$ be a solid brick with the property $\mathcal P$. Then the
	underlying graph of $G$ is an odd wheel. Moreover, the following
	statements hold:
	\begin{enumerate}[(i)]
	\item If $|V(G)|=4$, then the edges of multiplicity at least two
		are contained in either a cycle with length four  or the set of edges
		incident with one vertex.
		\item If $|V(G)|\geq6$, then every rim edge has multiplicity one.
		The spokes may have arbitrary positive multiplicities.
	\end{enumerate}
\end{Not}

\section{Splicing an Odd Wheel with a Brick}\label{sec:splicing}

\begin{Lem}\label{lem:rim vertex}
	Let $G_1$ be an odd wheel up to multiple edges with
	$|V(G_1)|\geq6$, let $x_1$ be a rim vertex of $G_1$, and let
	$G_2$ be a brick with $x_2\in V(G_2)$. Suppose that the graph
	$G\in G_1(x_1)\odot G_2(x_2)$ is a brick with the property $\mathcal P$.
	Then the following statements hold:
	\begin{enumerate}[(i)]
		\item
			$|N_{G_2}(x_2)|\geq4$ and $|V(G_2)|\geq6$;
		\item
			the underlying  graph of $G_2$ is neither an odd
			wheel nor isomorphic to any of
			$\{\overline{C_6},\overline{C_6}^{+},H_8\}$.
	\end{enumerate}
\end{Lem}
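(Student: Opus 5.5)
We prove this lemma by tracking the forcing edges of $G$ that meet the vertices coming from $G_1$. We transfer them back to $G_1$ and $G_2$ via Proposition~\ref{pro:M}, and we use Proposition~\ref{pro:splicing-degree} to control how the splicing cut $C$ attaches.

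Set up notation first. Let $G_1$ have hub $h$ and rim $x_1 r_1 \dots r_{2k} x_1$ with $k\ge 2$, so that $x_1$ has neighbours $h$, $r_1$, $r_{2k}$. By Note~\ref{not:solid-multiple}(ii), the edges $x_1r_1$ and $x_1r_{2k}$ are simple. The spoke $x_1h$ has some multiplicity $m\ge1$, so $d_{G_1}(x_1)=m+2=d_{G_2}(x_2)$. The rim vertices $r_j$ other than $r_1,r_{2k}$ keep all their edges inside $G_1-x_1$.

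Next identify which edges are forcing in $G_1$. The spokes are forcing. A rim edge $r_jr_{j+1}$ lies in two perfect matchings of the wheel once $|V(G_1)|\ge 6$, so it is not forcing. Now take a vertex $r_j$ with $2\le j\le 2k-1$. Since $G$ has property $\mathcal P$, $r_j$ is incident with a forcing edge of $G$. This edge lies in $E(G_1-x_1)$, so by Proposition~\ref{pro:M}(i) it is forcing in $G_1$, and hence it is the spoke $r_jh$. The unique perfect matching of $G_1$ containing $r_jh$ is $r_jh$ together with the alternating rim matching of the path $\mathrm{rim}-r_j$, and this matching uses $x_1r_1$ or $x_1r_{2k}$ according to the parity of $j$. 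By Proposition~\ref{pro:M}(i), the edge $\theta^{-1}(x_1r_1)$ (and likewise $\theta^{-1}(x_1r_{2k})$) is then forcing in $G_2$. Taking $j=2$ and $j=3$, which both exist since $k\ge2$, we get both of them. Proposition~\ref{pro:splicing-degree} gives $|\partial_G(r_1)\cap C|=1$, and similarly for $r_{2k}$. The spoke $r_jh$ with $j$ of the appropriate parity uses $x_1h$ in no case, so no information is forced on the $h$-side this way. However, $h$ itself, being in $G_1-x_1$ with all of its forcing spokes, also yields forcing edges of $G_2$ incident with $x_2$.

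For (i), use Lemma~\ref{thm:splicing-brick}. If $|N_{G_2}(x_2)|\le 3$, then the $m$ edges of $\partial_{G_2}(x_2)$ mapped to $x_1h$ would all go to one vertex $w$. Together with the single neighbours $a$ and $b$ of $x_2$ matched to $r_1$ and $r_{2k}$, this would give $N_{G_2}(x_2)=\{a,b,w\}$. The plan is to show that this configuration makes some pair, either $\{h,\cdot\}$ or a vertex of $G_2$ together with $h$, cover $C$, or else makes a forcing edge contradict Proposition~\ref{pro:splicing-degree}. The expected key step is the case $m\ge2$: the edge of $C$ at $h$ meets a single vertex $w$, and $\{h,w\}$ covers $C$ except for the two edges at $a$ and $b$. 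One must then exclude $a=b$ and show the remaining case forces $G_2$ to lose 3-connectivity or bicriticality. Once $|N_{G_2}(x_2)|\ge4$ is known, $|V(G_2)|\ge6$ follows, since $K_4$ has maximum degree $3$ in its underlying graph.

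For (ii), suppose the underlying graph of $G_2$ is an odd wheel, $\overline{C_6}$, $\overline{C_6}^{+}$ or $H_8$. By (i), $x_2$ has at least four distinct neighbours. In $\overline{C_6}$, $\overline{C_6}^{+}$ and $H_8$ every vertex has degree at most $4$, so only specific vertices qualify; in an odd wheel with at least $6$ vertices, $x_2$ must be the hub. In each case we check directly which edges at $x_2$ are forcing in $G_2$. We then exhibit a vertex of $G_2-x_2$ that is not incident with any forcing edge of $G$. For example, when $x_2$ is the hub of an odd wheel $G_2$, the rim vertices of $G_2$ see only rim edges, which are nonforcing, and spokes that are now edges of $C$. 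Proposition~\ref{pro:M}(ii) forces such a $C$-edge to pair with a forcing edge of $G_1$ at $x_1$, that is, the spoke $x_1h$. This can accommodate only the neighbours joined to $h$, and the remaining rim vertices of $G_2$ violate $\mathcal P$. The finite cases $\overline{C_6}$, $\overline{C_6}^{+}$ and $H_8$ are handled by the same bookkeeping. The main obstacle is the case analysis in (i) for the configuration with a multiple spoke, where Lemma~\ref{thm:splicing-brick} must be applied carefully.
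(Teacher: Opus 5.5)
\textbf{There is a genuine gap: part (i) is never actually proved, and part (ii) is incomplete.}

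\textbf{Part (i).} Your opening step is sound and is essentially the paper's. Forcing edges at $r_2,\dots,r_{2k-1}$ must be spokes, so Proposition~\ref{pro:splicing-degree} shows that $r_1$ and $r_{2k}$ each send a single edge of $C$, to $a$ and $b$ say, and $x_2a,x_2b$ are forcing in $G_2$. Citing Note~\ref{not:solid-multiple} for the simplicity of $x_1r_1,x_1r_{2k}$ is not legitimate, since $G_1$ is not known to be a solid brick with $\mathcal P$, but your own argument supplies this fact.

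After that, (i) is not established. Your reduction ``all copies of $x_1h$ go to one vertex $w$'' is unjustified, because copies may land on $a$ or $b$. More seriously, the contradiction cannot come from Lemma~\ref{thm:splicing-brick}, nor from $G_2$ failing to be 3-connected or bicritical. Take $\widetilde{G_2}=K_4$ on $\{x_2,a,b,w\}$ with $x_2w$ of multiplicity $m$. Then $G_2$ is a brick, and so is $G$, because $C$ contains the three disjoint edges $r_1a,r_{2k}b,hw$. So the contradiction must come from property $\mathcal P$ at a vertex of $G_2-x_2$. Your third option, a forcing-edge contradiction, is the right one, but you never identify that vertex. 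Here is the missing argument:
\begin{enumerate}[(a)]
\item Since $C$ is nontrivial and $G$ is a brick, $C$ is not tight, so some perfect matching $M$ of $G$ meets $C$ in three edges. As $\{h,r_1,r_{2k}\}$ covers $C$, we get $M\cap C=\{r_1a,r_{2k}b,hy_2\}$ with $y_2\notin\{a,b\}$.
\item Every edge of $C$ at $y_2$ goes to $h$ and lies in such a matching, so it is nonforcing by Proposition~\ref{pro:M}(ii). Hence $y_2$ has a forcing edge $e_y\in E(G_2-x_2)$.
\item By Proposition~\ref{pro:M}(i), the perfect matching containing $e_y$ meets $C$ in one edge, which corresponds to a forcing edge of $G_1$ at $x_1$. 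That must be a copy of the spoke, so the edge is $hz_2$ with $z_2\neq y_2$.
\item Proposition~\ref{pro:splicing-degree} gives $|\partial_G(z_2)\cap C|=1$, so $z_2\notin\{a,b\}$. Thus $a,b,y_2,z_2$ are four distinct neighbours of $x_2$.
\end{enumerate}

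\textbf{Part (ii).} $\overline{C_6}$ and $H_8$ are cubic, so (i) excludes them (and $K_4$) at once.

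In the wheel case, your claim that ``the remaining rim vertices of $G_2$ violate $\mathcal P$'' fails when $m$ is large and $h$ is joined by edges of $C$ to every rim vertex of $G_2$. There is then no remaining vertex, and a further argument is needed. With $e_y$ from (i) the case is immediate: $e_y$ is a forcing edge of $G_2$ inside $G_2-x_2$, hence a rim edge, which is impossible.

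For $\overline{C_6}^{+}$ with $x_2=a_0$, ``the same bookkeeping'' is not an argument. The paper's reasoning runs as follows:
\begin{enumerate}[(a)]
\item $b_0b_1$ is the only edge at $b_0$ that can be forcing. Its perfect matching in $G_2$ uses $a_0a_1$, so $a_1$ has a single $C$-edge, and that edge goes to $h$.
\item This $C$-edge must be forcing, because $a_1a_2$ and $a_1b_1$ are not. Consequently $\{a,b\}=\{a_2,b_1\}$.
\item This edge, together with $r_1a$, $r_{2k}b$, $b_0b_2$ and a perfect matching of the path $r_2\cdots r_{2k-1}$, forms a perfect matching of $G$ meeting $C$ in three edges. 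That contradicts Proposition~\ref{pro:M}(ii).
\end{enumerate}
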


\begin{proof}
\noindent\textbf{(i).}
		Let $N_{G_1}(x_1)=\{c_1,x_1^-,x_1^+\}$ and let
		$C=\partial_G(V(G_1)\setminus\{x_1\})$.
	For each $\varepsilon\in\{-,+\}$, let $f_\varepsilon$ be a
	forcing edge of $G$ incident with $x_1^\varepsilon$.
	If $f_\varepsilon\in C$, then $x_1x_1^\varepsilon$ would be
	forcing in $G_1$ by Proposition~\ref{pro:M}(i), contrary to the
	fact that it is a rim edge. Hence
	$f_\varepsilon\in E(G_1-x_1)$. Again by
	Proposition~\ref{pro:M}(i), $f_\varepsilon$ is forcing in $G_1$.
	Since every rim edge of $G_1$ is nonforcing, we have
	$f_\varepsilon=c_1x_1^\varepsilon$.
	The unique perfect matching of $G_1$ containing $f_-$ uses
	$x_1x_1^+$, whereas the unique perfect matching containing $f_+$
	uses $x_1x_1^-$. Hence both $x_1x_1^+$ and $x_1x_1^-$ have
	multiplicity one. Therefore, for each $\varepsilon\in\{-,+\}$,
	the set $\partial_G(x_1^\varepsilon)\cap C$ consists of a single
	edge, say $e_\varepsilon$. By Proposition~\ref{pro:M}(i), the
	edge of $G_2$ corresponding to $e_\varepsilon$ is forcing in
	$G_2$.
	Since $G$ is a brick and $C$ is a nontrivial cut, $C$ is not a
	tight cut. Let $M$ be a perfect matching of $G$ such that
	$|M\cap C|\geq3$. {Since $\{c_1,x_1^-,x_1^+\}$ covers $C$},
	$|M\cap C|=3$.
		Write
		$M\cap C=\{c_1y_2,x_1^-x_2^-,x_1^+x_2^+\}$, where
		$\{y_2,x_2^-,x_2^+\}\subseteq
		V(G_2)\setminus\{x_2\}$, and let $e_y$ be a forcing edge of
		$G$ incident with $y_2$.
	By the preceding argument, $c_1$ is the unique
	neighbor of $y_2$ in $V(G_1)\setminus\{x_1\}$.
	However, $c_1y_2$ is not a forcing edge of $G$ by
	Proposition~\ref{pro:M} (ii). So
	$e_y\in E(G_2-x_2)$. Let $M_y$ be the perfect matching of $G$
	containing $e_y$. By Proposition~\ref{pro:M} (i),
	$M_y\cap C$ contains exactly one edge, which corresponds to a
	forcing edge of $G_1$. So $c_1$ is one end vertex of the unique
	edge in $M_y\cap C$. Let
	$M_y\cap C=\{c_1z_2\}$, where
	$z_2\in V(G_2)\setminus\{x_2,y_2\}$.
	Since $e_y$ is a forcing edge of $G$,
	$|\partial_G(z_2)\cap C|=1$ and thus
	$z_2\notin\{x_2^-,x_2^+\}$.
	From the above discussion, $x_2^-$, $x_2^+$, $y_2$, and $z_2$
	are four distinct neighbors of $x_2$ and hence
		$|N_{G_2}(x_2)|\geq4$. Since $G_2$ is a brick, it has an even
	number of vertices and hence $|V(G_2)|\geq6$.
	
\noindent\textbf{(ii).}  By {(i)},
	$|N_{G_2}(x_2)|\geq4$ and thus $\widetilde{G_2}$ is not
	isomorphic to any of $\{K_4,\overline{C_6},H_8\}$.
	If $\widetilde{G_2}$ is isomorphic to an odd wheel, then, by
	{(i)}, $x_2$ is its hub. Moreover,
	$e_y\in E(G_2-x_2)$ is forcing in $G_2$ by
	Proposition~\ref{pro:M}(i). However, every edge of $G_2-x_2$
	is a rim edge and hence is nonforcing in $G_2$, a contradiction.
	
	\begin{figure}[htbp]
	\centering
	\includegraphics[width=\textwidth]{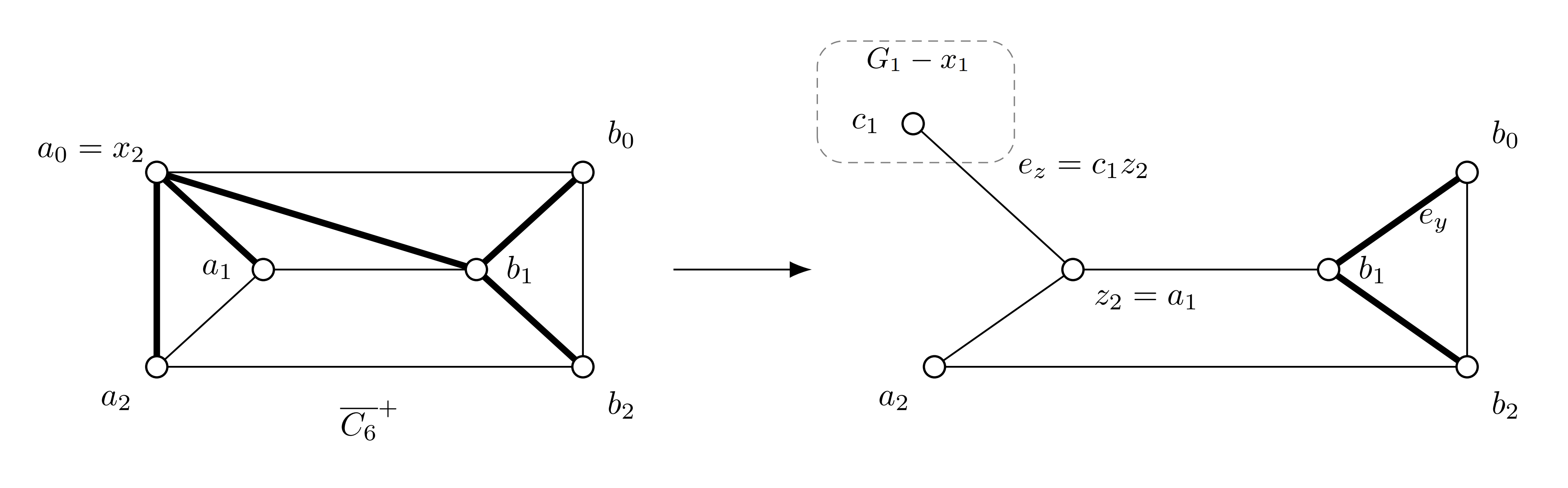}
	\caption{Illustration for the proof of
		Lemma~\ref{lem:rim vertex}(ii): the thick edges are forcing edges.}
	\label{fig:C6plus-rim}
\end{figure}

	If $\widetilde{G_2}\cong\overline{C_6}^{+}$, then
$\widetilde{G_2}$ has two $4$-degree vertices and four
$3$-degree vertices. By~(i),
	$|N_{G_2}(x_2)|\geq4$. Thus, using the vertex labels of $\overline{C_6}^{+}$  shown in
	Figure~\ref{fig:T6}, we may assume that $x_2=a_0$.
		As shown in Figure~\ref{fig:C6plus-rim}, $b_0b_1$ is the
		unique forcing edge of $G$ incident with $b_0$. So we can
		assume that $e_y=b_0b_1$ and thus $z_2=a_1$. Recall that
		$\partial_G(z_2)\cap C=\{c_1z_2\}$. Let $e_z$ be a forcing
		edge of $G$ incident with $z_2$. Then $e_z$ corresponds to a
		forcing edge of $G_2$, and thus $e_z\in C$.
		However,
		$\{e_z,x_1^-x_2^-,x_1^+x_2^+\}$ belongs to a perfect
		matching of $G$, a contradiction to
		Proposition~\ref{pro:M} (ii).
		So $\widetilde{G_2}$ is neither an odd wheel nor any of
		$\{\overline{C_6},\overline{C_6}^{+},H_8\}$.
\end{proof}

\begin{Lem}\label{lem:hub}
	Let $G_1$ be an odd wheel up to multiple edges with
	$|V(G_1)|\geq6$, let $c_1$ be its hub, and let $G_2$ be a brick with $x_2\in V(G_2)$. Suppose that
	$G\in G_1(c_1)\odot G_2(x_2)$ is a brick with the property $\mathcal P$.
	Then the following statements hold:
	\begin{enumerate}[(1)]
		\item $|N_{G_2}(x_2)|\geq4$ and $|V(G_2)|\geq6$;
		\item the underlying  graph of $G_2$ is neither an odd
		wheel nor isomorphic to any of
		$\{\overline{C_6},\overline{C_6}^{+},H_8\}$.
	\end{enumerate}
\end{Lem}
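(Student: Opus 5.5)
We follow the proof of Lemma~\ref{lem:rim vertex}, now splicing at the hub. Let the rim of $G_1$ be $r_1r_2\cdots r_{2k+1}r_1$ with $2k+1\geq5$, and let $C=\partial_G(V(G_1)\setminus\{c_1\})$. In $G$ every rim vertex $r_i$ keeps its two rim neighbours. Its remaining neighbours lie in $V(G_2)\setminus\{x_2\}$ and come from the (possibly multiple) spoke $c_1r_i$.

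\textbf{Step 1: forcing edges at rim vertices lie in $C$.} Rim edges of $G_1$ are nonforcing in $G_1$. So by Proposition~\ref{pro:M}(i) no rim edge is forcing in $G$. Hence each $r_i$ is incident with a forcing edge $f_i\in C$, say $f_i=r_iw_i$ with $w_i\in V(G_2)\setminus\{x_2\}$. By Proposition~\ref{pro:M}(ii), the spoke $c_1r_i$ corresponding to $f_i$ is forcing in $G_1$, and the edge $x_2w_i$ is forcing in $G_2$. Moreover, every perfect matching of $G$ containing $f_i$ meets $C$ only in $f_i$. Since $G_1-c_1-r_i$ is an even path with a unique perfect matching, this is consistent.

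\textbf{Step 2: the $w_i$ are pairwise distinct, giving (1).} Suppose $w_i=w_j$ with $i\neq j$. Then $x_2w_i$ has multiplicity at least two in $G_2$, so $|\partial_G(w_i)\cap C|\ge2$. If $M$ is the unique perfect matching of $G$ containing $f_i$, then $M\cap C=\{f_i\}$. Swapping $f_i=r_iw_i$ for $r_jw_i$ cannot immediately produce a second perfect matching, because the $G_1$-side matching changes. So I would use a different argument.

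Take the perfect matching $N$ of $G$ that contains $f_j$ and the perfect matching of $G_1-c_1-r_i$ together with the $G_2$-side of $M$. Here $r_jw_i\in C$ is an edge of $G$ corresponding to the spoke $c_1r_j$ and to $x_2w_i$. The perfect matching formed by the $G_2$-part of $M$ (which matches $G_2-x_2-w_i$) and the $G_1$-part matching $G_1-c_1-r_j$ contains $r_jw_i$. Hence $x_2w_i$ lies in perfect matchings of $G_2$ associated with two distinct edges of $C$. This contradicts the uniqueness statement in Proposition~\ref{pro:M} for $f_i$, since uniqueness of $M$ forces the $G_2$-side to be unique.

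The cleaner route is Proposition~\ref{pro:splicing-degree} applied in $G_1$ with the roles swapped: in $G_1$, $c_1r_i$ is a single edge whenever the corresponding element of $C$ is forcing. I would combine this with the fact that the $G_2$-side is forced to show that distinct $r_i$ produce distinct $w_i$. This gives at least $2k+1\geq5$, in particular at least four, distinct neighbours of $x_2$. Then $|V(G_2)|\geq6$ follows because $G_2$ is a brick and so has an even number of vertices. This distinctness step is the one I expect to be delicate. If it fails, I would instead use the non-tightness of $C$ as in Lemma~\ref{lem:rim vertex}(i) to find at least four distinct $C$-ends in $G_2$.

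\textbf{Step 3: excluding the small underlying graphs, giving (2).} By (1), $\widetilde{G_2}$ is not $K_4$, $\overline{C_6}$ or $H_8$, since these have maximum degree $3$. If $\widetilde{G_2}$ is an odd wheel, then $x_2$ is its hub. The vertices $w_i$ are rim vertices, and the spokes $x_2w_i$ are forcing in $G_2$.

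Consider a rim vertex $y$ of $G_2$. It has a forcing edge in $G$. If that edge lies in $C$, then its $G_1$-counterpart and its $G_2$-counterpart are both forcing. If it lies in $G_2-x_2$, it is a rim edge of $G_2$, which is nonforcing, contradicting Proposition~\ref{pro:M}(i). So every rim vertex of $G_2$ is joined into $C$ by a forcing edge.

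Now use the non-tightness of $C$: some perfect matching $M$ of $G$ has $|M\cap C|\geq3$. Apply Lemma~\ref{lem:bipartite}-style parity on the two wheels, where both $G_i-\text{hub}$ are odd cycles. A perfect matching using three cut edges leaves even paths on both rims, and I would derive a contradiction with the forcing of a spoke. Concretely, the forcing edge $f_i$ together with another cut edge of $M$ extends to a second perfect matching.

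Finally, if $\widetilde{G_2}\cong\overline{C_6}^{+}$, then $x_2$ is a $4$-degree vertex $a_0$. We argue as in Figure~\ref{fig:C6plus-rim}: $b_0$ must use $b_0b_1$, whose matching in $G_2$ avoids $x_2$'s cut edges. This yields three cut edges coexisting with a forcing cut edge, contradicting Proposition~\ref{pro:M}(ii).
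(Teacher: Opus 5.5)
Your proposal has a genuine gap at Step~2, and all of part~(1) depends on it. Nothing local prevents $w_i=w_j=w$.

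\begin{itemize}
\item The two corresponding copies of $x_2w$ in $G_2$ are distinct parallel edges. Each is forcing exactly when $G_2-x_2-w$ has a unique perfect matching, so both can be forcing at once.
\item The perfect matching you assemble through $r_jw$ contains $f_j$, not $f_i$. It is therefore not a second perfect matching containing $f_i$, and it contradicts nothing.
\item Your ``cleaner route'' misuses Proposition~\ref{pro:splicing-degree}. That result needs a forcing edge of $G_i-v_i$, and every edge of $G_1-c_1$ is a nonforcing rim edge. Moreover, what you must exclude is a multiple edge $x_2w$ in $G_2$, not a multiple spoke in $G_1$.
\item Your fallback to the argument of Lemma~\ref{lem:rim vertex}(i) does not transfer. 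That argument uses that $x_1$ has only the neighbours $c_1,x_1^-,x_1^+$, which pins the forcing edges at $x_1^{\pm}$ to spokes whose matchings use $x_1x_1^{\mp}$. At the hub there is no such handle.
\end{itemize}
Some global use of the fact that $G$ is a brick is unavoidable.

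The missing idea is the paper's auxiliary graph $G'=G-(C\setminus C_f)$, where $C_f$ is the set of forcing edges of $G$ lying in $C$. The paper argues as follows.
\begin{itemize}
\item Assume $N_{G_2}(x_2)=\{q_1,q_2,q_3\}$ and take $M$ with $M\cap C=\{p_1q_1,p_2q_2,p_3q_3\}$; these edges are nonforcing by Proposition~\ref{pro:M}(ii).
\item Show that $C_f$ saturates $\{q_1,q_2,q_3\}$ as well as $V(R_1)$. This uses Proposition~\ref{pro:splicing-degree} on forcing edges of $G_2-x_2$ and a parity choice on the odd rim.
\item Then $G'$ is a splicing of two bricks whose underlying graphs are $\widetilde{G_1}$ and $\widetilde{G_2}$. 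Yet $G'$ is not a brick: otherwise the nontight cut $C_f$ of $G'$ would yield a perfect matching of $G$ meeting $C$ in three forcing edges.
\item Lemma~\ref{thm:splicing-brick} then gives vertices $y_1,y_2$ covering $C_f$. From this cover one builds a perfect matching of $G$ meeting $C$ in three edges, one of them forcing, contradicting Proposition~\ref{pro:M}(ii).
\end{itemize}

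Your wheel case in Step~3 has the same hole. The claim that $f_i$ and another cut edge of $M$ extend to a second perfect matching is unjustified, because on each odd rim the three cut-edge ends must leave even gaps. The paper gets this control from the cover $\{y_1,y_2\}$ of $C_f$, together with an edge $uv\in C\setminus C_f$ avoiding $y_1,y_2$, which exists because $G$ is a brick. It chooses rim neighbours $z_1$ of $y_1$ and $z_2$ of $y_2$ so that $R_1-\{y_1,u,z_1\}$ and $R_2-\{y_2,v,z_2\}$ have perfect matchings. Then $z_1y_2,y_1z_2\in C_f$, and $\{uv,z_1y_2,y_1z_2\}$ extends to a perfect matching of $G$.

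Your degree argument excluding $K_4$, $\overline{C_6}$ and $H_8$ is in line with the paper but depends on part~(1). Your $\overline{C_6}^{+}$ sketch points the right way, with one correction. The matching of $b_0b_1$ in $G_2$ does use $x_2a_1$; that is what makes the lone cut edge at $a_1$ forcing. Since $a_1b_2\notin E(G)$, that edge must lie in every perfect matching with three cut edges.
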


\begin{proof}
\noindent\textbf{(1).}
	Let $R_1$ be the rim of $G_1$, let $C$ be the splicing cut, and
	let $C_f$ be the set of forcing edges of $G$ contained in $C$.
	Every rim edge of $G_1$ is nonforcing in $G_1$ and hence, by
	Proposition~\ref{pro:M}(i), nonforcing in $G$. Since the hub
	$c_1$ is deleted in the splicing
	and $G$ has the property $\mathcal P$,
	$C_f$ {saturates} $V(R_1)$.	Set $G'=G-(C\setminus C_f)$.
	
	{\begin{Cla}\label{cla:Gprime-not-brick}
			The graph $G'$ is not a brick.
	\end{Cla}}
	
	\begin{proof}
		Otherwise, $C_f$ would be a nontrivial, nontight cut of $G'$.
		Since both shores of $C_f$ contain an odd number of vertices, some perfect matching
		of $G'$ would {share at least three edges with $C_f$}.
		This matching is also a perfect matching of $G$
		{sharing at least three edges with $C$}
		and containing forcing edges, contrary to
		Proposition~\ref{pro:M}(ii).
	\end{proof}
	
	Suppose, to the contrary, that $|N_{G_2}(x_2)|\leq3$. Since $G_2$
	is 3-connected, write
	$N_{G_2}(x_2)=\{q_1,q_2,q_3\}$. Since $G$ is a brick and $C$ is
	a nontrivial cut, $C$ is not tight. Hence $G$ has a perfect
	matching $M$ with $|M\cap C|\geq3$. {Since $\{q_1,q_2,q_3\}$ covers $C$}, $|M\cap C|=3$.
	Write $M\cap C=\{p_1q_1,p_2q_2,p_3q_3\}$, where
	$p_1,p_2,p_3\in V(R_1)$. By Proposition~\ref{pro:M}(ii), every
	edge of $M\cap C$ is nonforcing.
		\begin{Cla}\label{cla:Cf-covers-neighbors}
			The set $C_f$ {saturates} $N_{G_2}(x_2)$.
		\end{Cla}
	\begin{proof}
		Suppose, by
		relabeling, that $q_1$ is incident with no edge of $C_f$.
	Since $G$ has the property $\mathcal P$, $q_1$ is incident with a forcing edge
		$e_1\in E(G_2-x_2)$. By Proposition~\ref{pro:M}(i), $e_1$ is
		forcing in $G_2$. The unique perfect matching of $G_2$
		containing $e_1$ uses an edge joining $x_2$ to either $q_2$ or
		$q_3$,
		say that this end is $q_2$.
		By {Proposition~\ref{pro:splicing-degree}}, {we can deduce that $\partial_G(q_2)\cap C=\{p_2q_2\}$.
			Since $p_2q_2$} is nonforcing, $q_2$ is incident with a forcing edge
		$e_2\in E(G_2-x_2)$. Applying the same argument to $e_2$, we
		obtain either $|\partial_G(q_1)\cap C|=1$ or
		$|\partial_G(q_3)\cap C|=1$, {say the former.}
		It follows that every edge of $C_f$ is incident with $q_3$.
		Since $C_f$ {saturates} $V(R_1)$, every vertex of $R_1$ is joined
		to $q_3$ by an edge of $C_f$. Let $P$ be the unique
		$p_1$--$p_2$ path in $R_1$ containing an odd number of vertices, and let $p$ be the
		neighbor of $p_1$ on $P$. Then
		$R_1-\{p_1,p_2,p\}$ has a perfect matching. Hence
		the matching $\{p_1q_1,p_2q_2,pq_3\}$ is contained
			in a perfect matching of $G$
		{sharing three edges with $C$} and containing the
		forcing edge $pq_3$, contrary to
		Proposition~\ref{pro:M}(ii). Thus $C_f$
		{saturates} $N_{G_2}(x_2)$.
	\end{proof}
	
Since $C_f$ saturates both $V(R_1)$ and
$N_{G_2}(x_2)$, the graph $G'$ is a splicing of two graphs whose
underlying graphs are the same as those of $G_1$ and $G_2$,
respectively. Hence
	they are bricks by Proposition~\ref{pro:underlying-simple}.
	Since $G'$ is not a brick, Lemma~\ref{thm:splicing-brick} yields
	vertices $y_1\in V(R_1)$ and
	{$y_2\in \{q_1,q_2,q_3\}$ that cover $C_f$.}
	We claim that $y_1y_2\in M$. Otherwise, let $z_1y_2$ be the
	edge of $M\cap C$ incident with $y_2$.  {Since $z_1\ne y_1$, $C_f$ {saturates} $z_1$, and
		$\{y_1,y_2\}$ covers $C_f$, we have $z_1y_2\in C_f$,}
	contrary
	to Proposition~\ref{pro:M}(ii).
	Relabel the edges of $M\cap C$ so that $y_1=p_1$ and
	$y_2=q_1$. The edge $p_1q_1$ is nonforcing, and the two rim edges incident
	with $p_1$ are also nonforcing. Since $G$ has the property $\mathcal P$,
	$p_1$ is incident with another edge $f\in C_f$. If $f$ also
	joins $p_1$ and $q_1$, replacing $p_1q_1$ by $f$ gives an
	immediate contradiction. Otherwise, by relabeling
	$p_2q_2$ and $p_3q_3$, assume that $f$ joins $p_1$ and $q_2$.
	Since $C_f$ {saturates} $p_2$ and
	$\{p_1,q_1\}$ covers $C_f$, there is an edge $g\in C_f$ joining
	$p_2$ and $q_1$. Replacing $M\cap C$ by
	$\{f,g,p_3q_3\}$ gives a perfect matching
	{sharing three edges with $C$} and containing a
	forcing edge, again contrary to Proposition~\ref{pro:M}(ii).
	Therefore, $|N_{G_2}(x_2)|\geq4$. Since $G_2$ is a brick, it
	has an even number of vertices, and hence $|V(G_2)|\geq6$.
	
\noindent\textbf{(2).}
	Suppose that $\widetilde{G_2}$ is an odd wheel, with rim $R_2$.
	Since $|N_{G_2}(x_2)|\geq4$, the vertex $x_2$ is its hub.
	Moreover, the rim edges are nonforcing. Since $G$ has the property $\mathcal P$,
	$C_f$ {saturates}
	$V(R_2)=N_{G_2}(x_2)$.
	Since $C_f$ {saturates} both $V(R_1)$ and $V(R_2)$,
	the two graphs
	used to obtain $G'$ by splicing have underlying graphs
	$\widetilde{G_1}$ and $\widetilde{G_2}$, respectively, and hence
	are bricks by Proposition~\ref{pro:underlying-simple}. By
	Claim~\ref{cla:Gprime-not-brick}, $G'$ is not a brick.
	Lemma~\ref{thm:splicing-brick} yields vertices
	$y_1\in V(R_1)$ and $y_2\in V(R_2)$ such that every edge of
	$C_f$ is incident with $y_1$ or $y_2$.
	Since $G$ is a brick, Lemma~\ref{thm:splicing-brick} implies
	that $\{y_1,y_2\}$ does not cover $C$. Hence there is an edge
	$uv\in C\setminus C_f$, where
	$u\in V(R_1)\setminus\{y_1\}$ and
	$v\in V(R_2)\setminus\{y_2\}$.
There exist
	$z_1\in N_{G_1}(y_1)\cap V(R_1)$ and
	$z_2\in N_{G_2}(y_2)\cap V(R_2)$ such that
	$R_1-\{y_1,u,z_1\}$ has a perfect matching $M_1$ and
	$R_2-\{y_2,v,z_2\}$ has a perfect matching $M_2$. Since $C_f$ saturates both $V(R_1)$ and $V(R_2)$ and
	$\{y_1,y_2\}$ covers $C_f$, the edges
	$z_1y_2$ and $y_1z_2$ belong to $C_f$. Therefore,
	$M_1\cup M_2\cup\{uv,z_1y_2,y_1z_2\}$ is a perfect matching
	of $G$
	sharing three edges with $C$ and containing
	the forcing edges $z_1y_2$ and $y_1z_2$,
	contrary to Proposition~\ref{pro:M}(ii). Thus
	$\widetilde{G_2}$ is not an odd wheel.
	
	Since every vertex of $\overline{C_6}$ and $H_8$ has degree
	three, $|N_{G_2}(x_2)|\geq4$ implies that
	$\widetilde{G_2}$ is isomorphic to neither
	$\overline{C_6}$ nor $H_8$.
	Finally, suppose that
	$\widetilde{G_2}\cong\overline{C_6}^{+}$. Using the vertex labels
	for $\overline{C_6}^{+}$ shown in
	Figure~\ref{fig:T6}, the only 4-degree vertices are
	$a_0$ and $b_1$, assume that $x_2=a_0$.
		As in the proof of Lemma~\ref{lem:rim vertex}(ii),
		see Figure~\ref{fig:C6plus-rim},
		write
		$\partial_G(a_1)\cap C=\{e\}$. Since all other edges incident
		with $a_1$ are nonforcing, $e$ is forcing. Choose a perfect
		matching $M'$ of $G$ with $|M'\cap C|\geq3$. If $e\notin M'$, then $M'\cap C$ saturates
		$\{a_2,b_0,b_1\}$. The graph
			$(G_2-x_2)-\{a_2,b_0,b_1\}$ consists of the two isolated
			vertices $a_1$ and $b_2$, contradicting the fact that $M'$ is
		a perfect matching of $G$. Thus $e\in M'$, contrary to
		Proposition~\ref{pro:M}(ii).
	So $\widetilde{G_2}$ is neither an odd wheel nor any of
	$\{\overline{C_6}, \overline{C_6}^{+},H_8\}$.
	%
	%
\end{proof}

\begin{Lem}\label{lem:k4-splicing}
	Let $K$ and $B$ be graphs such that $\widetilde K\cong K_4$ and
	$\widetilde B$ is isomorphic to one of
	$\{K_4,\overline{C_6},\overline{C_6}^{+},H_8\}$. Let $G$ be a
	simple brick obtained by splicing $K$ and $B$. If $G$ has
		property $\mathcal P$, then $G$ is isomorphic to one of
		$\{\overline{C_6},\overline{C_6}^{+}, H_8\}$.
\end{Lem}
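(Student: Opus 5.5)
We reduce to a finite analysis of small splicings, exploiting that every vertex of $K_4$ has degree three in the underlying graph. Write $G=(K(k)\odot B(b))_\theta$ with splicing cut $C$. The shore $V(K)\setminus\{k\}$ has exactly three vertices, and $G$ is simple, so $d_K(k)=3$ and $k$ is joined to the three other vertices of $K$ by single edges. Hence $d_B(b)=3$. Moreover, $C$ consists of three edges joining the triangle $T=K-k$ to $B-b$. Inside $T$ the three edges are simple; the only multiplicities of $K$ that survive in $G$ lie on $C$, and simplicity forbids them. So the triangle side is completely rigid. On the $B$ side, $d_B(b)=3$ means that $b$ has three distinct neighbours $q_1,q_2,q_3$ in $B$ (by 3-connectedness), each joined to $b$ by a single edge. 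In $G$, each $q_i$ is joined to a distinct vertex $p_i$ of $T$, and $C=\{p_1q_1,p_2q_2,p_3q_3\}$. Thus $G$ is obtained from $B$ by replacing the degree-3 vertex $b$ with a triangle, that is, by a $\Delta$-insertion at $b$. Since $G$ is simple, $B-b$ is also simple, and $\widetilde B$ determines $G$ up to the choice of $b$ among the degree-3 vertices of $\widetilde B$.

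We now run through the four possibilities for $\widetilde B$ and the orbits of its degree-3 vertices under automorphisms.
\begin{enumerate}[(a)]
\item If $\widetilde B\cong K_4$, then $G\cong\overline{C_6}$.
\item If $\widetilde B\cong\overline{C_6}$ (vertex-transitive), $G$ is the $8$-vertex graph obtained by replacing one prism vertex with a triangle.
\item If $\widetilde B\cong\overline{C_6}^{+}$, there are two orbits of degree-3 vertices, namely $\{a_1,b_0\}$ and $\{a_2,b_2\}$ in the labelling of Figure~\ref{fig:T6}, up to symmetry.
\item If $\widetilde B\cong H_8$, the degree-3 vertices fall into a few orbits, for instance $\{a_0,b_0\}$, $\{a_1,a_2,b_1,b_2\}$ and $\{s_1,s_2\}$.
\end{enumerate}
Replacing $s_1$ in $H_8$ by a triangle, or, after checking, some vertex of $\overline{C_6}^{+}$, might yield graphs isomorphic to $\overline{C_6}^{+}$ or $H_8$. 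However, the number of vertices grows by two, so in fact $|V(G)|=|V(B)|+2\ge 6$. Thus $G\cong\overline{C_6}$ requires $\widetilde B\cong K_4$. The graph $H_8$ has 8 vertices and must come from a 6-vertex $B$, namely $\overline{C_6}$, or possibly $\overline{C_6}^{+}$ with a degree-3 vertex. The graph $\overline{C_6}^{+}$ has 6 vertices, hence $\widetilde B\cong K_4$; but $B$ may carry multiple edges, and a $\Delta$-insertion into a multigraph $K_4$ can produce the extra edge of $\overline{C_6}^{+}$, because multiplicities of $B$ not at $b$ survive. So a careful enumeration must track which edges of $B-b$ are doubled; simplicity of $G$ forces them to be single. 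We therefore reconsider: the extra edge arises when $K$ itself has a doubled edge between $T$ and $k$. That is impossible, so $\overline{C_6}^{+}$ must arise differently. The resolution is that the roles of $K$ and $B$ can be swapped: when $\widetilde B\cong K_4$, the vertex $b$ has degree 3, but $k$ may have degree greater than 3 via multiple spokes. Our claim that $d_K(k)=3$ fails precisely when the $T$-side vertices are allowed multiple neighbours across $C$. This is the first point to verify carefully. In general, $d_K(k)=d_B(b)$ is arbitrary, and each $T$-vertex may receive several cut edges going to distinct vertices of $B-b$.

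The corrected main step is therefore as follows. Let $T=K-k$ be a triangle $p_1p_2p_3$ in $G$, and let $N_i$ be the set of neighbours of $p_i$ in $B-b$; these sets are pairwise disjoint, since $b$ receives exactly these edges. Property $\mathcal P$ at the triangle vertices, combined with Proposition~\ref{pro:M}, severely restricts the $N_i$. Triangle edges are forcing in $K$ only in special multiplicity patterns. A cut edge $p_iq$ that is forcing must correspond to a forcing edge $kp_i$ of $K$, which forces $kp_j$, for $j\ne i$, to be single edges (by Proposition~\ref{pro:splicing-degree}-type arguments), and so $|N_j|=1$. We then bound the total: at most one $p_i$ has $|N_i|\ge2$, and $|N_i|\le 2$ because of property $\mathcal P$ at the vertices of $N_i$ inside $B$. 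This limits $d_B(b)\le4$, and the resulting finitely many configurations of $(\widetilde B,b,\theta)$ are checked directly against $\mathcal P$. The main obstacle is this case check, especially for $\widetilde B\cong H_8$ and $\overline{C_6}^{+}$, where one must exhibit, in each rejected configuration, a vertex with no forcing edge. For this we plan to use Proposition~\ref{pro:M}(ii) together with a perfect matching meeting $C$ in three edges, as in the proofs of Lemmas~\ref{lem:rim vertex} and~\ref{lem:hub}. The surviving configurations should be exactly $\overline{C_6}$ (from $K_4$ with $d(b)=3$), $\overline{C_6}^{+}$ (from $K_4$ at a vertex with one doubled spoke, $|N_i|=2$), and $H_8$ (from $\overline{C_6}$).
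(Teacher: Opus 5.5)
\textbf{There is a genuine gap.} After your first reduction collapses, the argument that replaces it rests on a false claim and on unsupported ones, and the actual verification is never done.

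\textbf{The first reduction.} As you notice yourself, the $\Delta$-insertion reduction is wrong. Simplicity of $G$ does not rule out parallel edges of $K$ at $k$ or of $B$ at $b$, because parallel copies of $kp_i$ (or of $bq$) become cut edges with distinct far ends.

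\textbf{The replacement step.} Here the problems are more serious.
\begin{enumerate}[(a)]
\item The sets $N_i$ are \emph{not} pairwise disjoint, since a parallel edge $bq$ of $B$ can become two cut edges $p_iq$ and $p_jq$. This is exactly how $\overline{C_6}^{+}$ arises: cut edges $p_1b_1,p_2b_2,p_3b_3,p_1b_2$, so $b_2\in N_1\cap N_2$. Your own predicted configuration for $\overline{C_6}^{+}$, with $|N_1|=2$ and $|V(B-b)|=3$, is incompatible with disjointness.
\item The claim that a forcing cut edge $p_iq$ makes $kp_j$ ($j\neq i$) simple has no valid justification. Proposition~\ref{pro:M}(ii) only gives that $kp_i$ is forcing in $K$. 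But $K-k-p_i$ consists of the copies of $p_jp_l$, so $kp_i$ is forcing in $K$ precisely when $p_jp_l$ is simple, which is automatic because $p_jp_l\in E(G)$. Proposition~\ref{pro:splicing-degree} does not apply either: it concerns forcing edges of $G_i-v_i$, while $kp_i$ lies at the splicing vertex.
\item The bounds $|N_i|\le 2$ and ``at most one $|N_i|\ge 2$'' are asserted without argument.
\item The ``direct check'' of the surviving configurations, which is the whole content of the lemma, is deferred.
\end{enumerate}

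\textbf{What is missing.} You need a systematic way to carry out that check, and you name the right tool only in your last sentence. The paper first uses Lemma~\ref{thm:splicing-brick} (via K\"onig's theorem) to obtain a matching $M\subseteq C$ with $|M|=3$. It then repeatedly uses two facts. Any perfect matching of $G$ meeting $C$ in three edges contains no forcing edge (Proposition~\ref{pro:M}). Proposition~\ref{pro:splicing-degree} forces single cut edges at specific vertices. It argues case by case on $\widetilde B$:
\begin{enumerate}[(i)]
\item For $\widetilde B\cong K_4$, one has $G-(C\setminus M)\cong\overline{C_6}$. If $|C\setminus M|\ge2$, then either some vertex $p_3$ meets no forcing edge, or $G$ has no forcing edge at all.
\item For $\widetilde B\cong\overline{C_6}$ with $b=x$, property $\mathcal P$ at $y'$ and $z'$ forces $x'y'$ and $x'z'$ to be forcing, hence single cut edges at $y$ and $z$. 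Then $yz$ must be forcing, which gives a single cut edge at $x'$, so $C=M$ and $G\cong H_8$.
\item For $\overline{C_6}^{+}$ and $H_8$, a claim shows that $b$ meets at least two forcing edges of $\widetilde B$, which reduces to $b=a_0$. In each case the paper then exhibits a perfect matching meeting $C$ in three edges and containing a forcing edge, a contradiction.
\end{enumerate}
Without arguments of this kind, your proposal does not prove the lemma.
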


\begin{proof}
	Let $u$ and $v$ be the splicing vertices of $K$ and $B$,
	respectively, let $C$ be the splicing cut. Write
	$V(K)\setminus\{u\}=\{p_1,p_2,p_3\}$. By
	Lemma~\ref{thm:splicing-brick}, the cut $C$ contains a matching
	$M$ with $|M|=3$; otherwise, two vertices, one from each
	shore, would cover $C$.

	\medskip
	\noindent\textbf{Case 1.}
	$\widetilde{B}\cong K_4$.
	
	Write $V(B)\setminus\{v\}=\{b_1,b_2,b_3\}$. Every vertex of these two triangles
	is incident with $C$. Write
	$M=\{p_1b_1,p_2b_2,p_3b_3\}$, by relabeling if necessary.
	Then $G-(C\setminus M)\cong \overline{C_6}$. Hence, if $C=M$, then
	$G\cong \overline{C_6}$, whereas if $|C\setminus M|=1$, then
	$G\cong \overline{C_6}^{+}$; see Figure~\ref{fig:T6}.
	Suppose that $|C\setminus M|\geq2$. If some vertex of either
	triangle is incident with no edge of $C\setminus M$, then we may
	interchange the two triangles and relabel their vertices so that
	$p_3$ is incident with no edge of $C\setminus M$, whereas both
	$p_1$ and $p_2$ are incident with such an edge. Indeed, when
	$|C\setminus M|=2$, the two edges have distinct ends in at least
	one triangle because $G$ is simple; when $|C\setminus M|\geq3$,
	the assertion follows since each vertex is incident with at most
	two edges of $C\setminus M$. Since $M$ is a perfect matching of
	$G$ {sharing three edges with $C$},
	Proposition~\ref{pro:M}(ii) implies
	that $p_3b_3$ is nonforcing. Moreover, both $p_1p_3$ and $p_2p_3$ belong to at least two
	perfect matchings of $G$ and hence are nonforcing. Thus $p_3$ is not incident with any forcing edge, a
	contradiction.
	It remains to consider the case in which every vertex of $G$ is
	incident with an edge of $C\setminus M$. Hence every vertex is
	incident with at least two edges of $C$, and every edge in
	$E(G)\setminus C$ is nonforcing. Let $e\in C$. After deleting
	the two ends of $e$, each remaining vertex is still incident
	with an edge of $C$. Thus $C\setminus\{e\}$ contains a matching
	with two edges that saturates the four remaining vertices. Together
	with $e$, this matching forms a perfect matching of $G$.
	By Proposition~\ref{pro:M}(ii), the edge $e$ is
	nonforcing. Thus $G$ contains no forcing edge, contrary to
	the assumption that $G$ has the property $\mathcal P$.
	Therefore, if the underlying  graph of $B$ is isomorphic to
	$K_4$, then $G\cong \overline{C_6}$ or
	$G\cong \overline{C_6}^{+}$.
	
	\medskip
	\noindent\textbf{Case 2.}
	$\widetilde{B}\cong \overline{C_6}$.

	We use the vertex labels for $\overline{C_6}$ shown in
	Figure~\ref{fig:T6}. Since $\overline{C_6}$ is vertex-transitive,
	we may assume that $v=x$. 
	 The vertices of
	$B-v$  incident with some edge in $C$ are precisely $y,z,x'$.    
	The matching $M$ {saturates
	$\{p_1,p_2,p_3,y,z,x'\}$}.
	We may write
	$M=\{p_1y,p_2z,p_3x'\}$ after relabeling
	$p_1,p_2,p_3$ if necessary. Thus $M\cup\{y'z'\}$ is a perfect
	matching of $G$ {sharing three edges with $C$}. By
	Proposition~\ref{pro:M}(i), the edge $y'z'$ is nonforcing.	
	\begin{figure}[htbp]
		\centering
		\includegraphics[width=\textwidth]{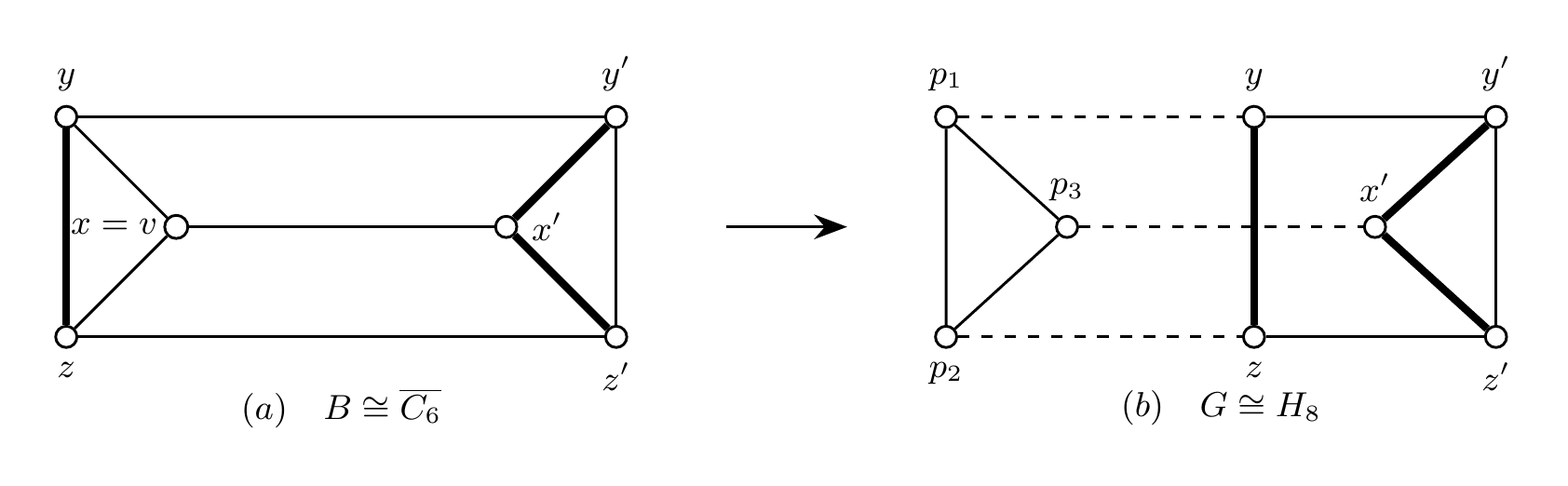}
		\caption{Illustration for Case~2 in the proof of
			Lemma~\ref{lem:k4-splicing}: the thick edges are forcing edges,
			and the dashed edges belong to the splicing cut $C$.}
		\label{fig:k4-splicing-case2}
	\end{figure}
	The edges $yy'$ and $zz'$ belong to at least two perfect
	matchings of $\widetilde B~ (\cong \overline{C_6})$ and hence are
	nonforcing in $B$. By Proposition~\ref{pro:M}(i), they are also
	nonforcing in $G$. Since $G$ has the property $\mathcal P$, both $y'$ and $z'$ are incident with forcing edges of $G$.
	By the preceding discussion,
	$x'y'$ and $x'z'$ are forcing in $G$ and, by
	Proposition~\ref{pro:M}(i), forcing in $B$.	
	The unique perfect matchings of $B$ containing $x'y'$ and $x'z'$
	use the edges $xy$ and $xz$, respectively. By
	{Proposition~\ref{pro:splicing-degree}},
	$|\partial_G(y)\cap C|=|\partial_G(z)\cap C|=1$. Hence $p_1y$
	and $p_2z$ are the only edges of $C$ incident with $y$ and $z$,
	respectively.
	Since $M\cup\{y'z'\}$ is a perfect matching of $G$
	{sharing three edges with $C$},
	Proposition~\ref{pro:M}(ii) implies that $p_1y$ is
	nonforcing. The edge $yy'$ is also nonforcing.
	Since $G$ has the property $\mathcal P$, $yz$ is forcing in $G$, {and thus is forcing in $B$ by
		Proposition~\ref{pro:M}(i). The unique perfect matching of $B$ containing $yz$ uses the edge $xx'$, }and hence
	{Proposition~\ref{pro:splicing-degree}} gives
	$|\partial_G(x')\cap C|=1$. Consequently,
	$C=\{p_1y,p_2z,p_3x'\}$.
	Therefore, $G\cong H_8$; see
	Figure~\ref{fig:k4-splicing-case2}.

	\medskip
	\noindent\textbf{Case 3.}
	$\widetilde B\cong\overline{C_6}^{+}$ or $H_8$.
	
	Using the vertex labels of the graphs $\overline{C_6}^{+}$ and $H_8$ in Figure~\ref{fig:T6}, a direct check
	shows that the forcing edges of $\overline{C_6}^{+}$ are
	$a_0a_1,a_0a_2,a_0b_1,b_0b_1,b_1b_2$, whereas those of $H_8$
	are $a_0a_1,a_0a_2,b_0b_1,b_0b_2,s_1s_2$. Since $G$ is simple,
	every multiple edge of $B$ is incident with $v$. Note that every
	edge of $B-v$ that is nonforcing in $\widetilde B$ is also
	nonforcing in $B$.
\begin{Cla}\label{cla:case3-forcing-at-v}
	The splicing vertex $v$ is incident with at least two forcing
	edges of $\widetilde B$.
\end{Cla}
\begin{proof}
Suppose, to the contrary, that $v$ is incident with exactly one
forcing edge of $\widetilde B$. Then $d_{\widetilde B}(v)=3$.
Write $N_{\widetilde B}(v)=\{q_1,q_2,q_3\}$, where $vq_1$ is
the unique forcing edge of $\widetilde B$ incident with $v$.
We may write $M=\{p_1q_1,p_2q_2,p_3q_3\}$ after relabeling
$p_1,p_2,p_3$ if necessary. In either
$\overline{C_6}^{+}$ or $H_8$, a direct check shows that
$B-\{v,q_1,q_2,q_3\}$ has a perfect matching. Hence
$M$ is contained in a perfect matching of $G$ sharing three edges with
$C$, and Proposition~\ref{pro:M}(ii) implies that every edge
of $M$ is nonforcing.
We claim that no forcing edge of $C$ is incident with $p_1$.
	Indeed, suppose that $p_1q\in C$ is forcing in $G$. By
	Proposition~\ref{pro:M}(ii), $vq$ is forcing in $B$ and hence
	in $\widetilde B$. Since $vq_1$ is the unique forcing edge of
	$\widetilde B$ incident with $v$, we have $q=q_1$. Thus
	$p_1q=p_1q_1$, which is nonforcing, a contradiction.
Since $G$ has the property $\mathcal P$, the vertex $p_1$ is therefore incident
with a forcing edge $e_1\in E(K-u)$, say $e_1=p_1p_2$. The
unique perfect matching of $K$ containing $e_1$ uses the edge
$up_3$.
	Proposition~\ref{pro:splicing-degree} gives
	$\partial_G(p_3)\cap C=\{p_3q_3\}$. Thus $up_3$ corresponds
	to $vq_3$, and Proposition~\ref{pro:M}(i) implies that $vq_3$
	is forcing in $B$, and hence in $\widetilde B$.
This contradicts the choice of $vq_1$. Thus $v$ is incident with
at least two forcing edges of $\widetilde B$.
\end{proof}

	By Claim~\ref{cla:case3-forcing-at-v} and the above lists of
	forcing edges, up to an automorphism of $\widetilde B$, it remains
	only to consider
	$(\widetilde B,v)=(\overline{C_6}^{+},a_0)$ and
	$(\widetilde B,v)=(H_8,a_0)$. First suppose that
	$(\widetilde B,v)=(\overline{C_6}^{+},a_0)$. As in the
	$\overline{C_6}^{+}$ case in the proof of
	Lemma~\ref{lem:rim vertex}(ii),
	there is a forcing edge $e\in C$ such that
	$\partial_G(a_1)\cap C=\{e\}$.
	After relabeling $p_1,p_2,p_3$ if necessary, write
		$e=p_1a_1$. The edge $e$ is contained in a matching
	$M_e\subseteq C$ with $|M_e|=3$.
	Otherwise, all edges of $C$ incident with
		$p_2$ or $p_3$ would have a common end $w\in V(B-v)$, so
		$\{p_1,w\}$ would cover $C$, contrary to
		Lemma~\ref{thm:splicing-brick}.
	After deleting from $B-v$ all vertices incident with an edge of
	$M_e$, the two remaining vertices are adjacent.
	Let $f$ be the edge joining them. Then
		$M_e\cup\{f\}$ is a perfect matching of $G$
	sharing three edges with $C$ and containing the forcing edge $e$,
	contrary to Proposition~\ref{pro:M}(ii). Now suppose that
	$(\widetilde B,v)=(H_8,a_0)$. 
		In this case, $b_0b_2$ is the only edge of $G$ incident with
		$b_2$ that can be forcing. Hence the property $\mathcal P$
		implies that $b_0b_2$ is forcing in $G$. Its unique perfect
		matching in $B$ contains $a_0a_1$, so
		Proposition~\ref{pro:splicing-degree} gives
		$\partial_G(a_1)\cap C=\{e'\}$. Since the other edges incident
		with $a_1$ are nonforcing, $e'$ is forcing in $G$.
	 After relabeling $p_1,p_2,p_3$ if necessary, write
		$e'=p_1a_1$. The same argument used for $e$ shows
		that $e'$ is contained in a matching
		$M_{e'}\subseteq C$ with $|M_{e'}|=3$.
	Since the only vertices of $B-v$ incident with $C$
		are $a_1,a_2,b_0$, the graph obtained from $B-v$ by deleting
	the vertices incident with the edges of $M_{e'}$ has the
	perfect matching $\{b_1s_1,b_2s_2\}$. Thus
	$M_{e'}\cup\{b_1s_1,b_2s_2\}$ is a perfect matching of $G$
	containing the forcing edge $e'$ and sharing three edges with
	$C$, contrary to Proposition~\ref{pro:M}(ii).
\end{proof}

	 \section{Proof of Theorem~\ref{thm:intro-main}}\label{sec:main}

%
%
%
%
%

	\begin{Lem}\label{cla:reduction}
			If $G$ is a nonsolid brick with the property $\mathcal P$, then
			$G$ has a robust cut $C$ such that one of the $C$-contractions
			is an odd wheel up to multiple edges, and the
			other
			is a brick with the property
			$\mathcal P$.
	\end{Lem}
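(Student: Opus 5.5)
We apply Theorem~\ref{lem:robustcut} to the nonsolid brick $G$. This gives a robust cut $C=\partial(X)$, subsets $X'\subseteq X$ and $X''\subseteq\overline X$, and the associated contractions. The graph $G_1:=G/\overline{X'}$ is a solid brick, $G_2'':=G/\overline{X''}$ is a brick, and the graph $H$ obtained by contracting $X'$ to $x'$ and $X''$ to $x''$ is bipartite and matching covered, with $x'$ and $x''$ in different parts. The plan is to show three things: the solid piece is an odd wheel up to multiple edges, $H$ is trivial, and the complementary contraction inherits $\mathcal P$. When $H$ is trivial, the cut $C=\partial(X')$ is itself the robust cut we want, since $G/\overline{X'}=G_1$ and $G/X'$ is then the contraction containing $X''$.

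\textbf{Step 1: $G_1$ has the property $\mathcal P$ at every vertex other than its contraction vertex.} Let $w\in X'$ and let $e$ be a forcing edge of $G$ at $w$. We view $G$ as the splicing of $G_1$ at its contraction vertex $\bar x$ with $G/X'$. If $e\in E(G_1-\bar x)$, then Proposition~\ref{pro:M}(i) shows that $e$ is forcing in $G_1$. If $e\in\partial(X')$, then Proposition~\ref{pro:M}(ii) shows that the corresponding edge of $G_1$ is forcing.

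Hence at most one vertex of $G_1$, namely $\bar x$, can fail to meet a forcing edge. Applying Lemma~\ref{lem:oddwheel} to the simple underlying graph $\widetilde{G_1}$ then shows that $G_1$ is an odd wheel up to multiple edges. This uses Proposition~\ref{pro:underlying-simple}, and Lemma~\ref{lem:solid oddcycle}, which guarantees that solidity passes to $\widetilde{G_1}$ because perfect matchings are unaffected. One point needs care here: a forcing edge in the multigraph $G_1$ has multiplicity one, so it is also forcing in $\widetilde{G_1}$.

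\textbf{Step 2: $H$ is trivial, i.e.\ $|V(H)|=2$.} Suppose instead that $|V(H)|\ge4$. By Lemma~\ref{lem:bipartite}, every edge of $H$ not joining $x'$ and $x''$ is nonforcing in $H$. Pick a vertex $w$ of $H$ other than $x',x''$ (it lies in $X\setminus X'$ or $\overline X\setminus X''$), and let $e$ be a forcing edge of $G$ at $w$. Since $G$ is obtained from $H$ by splicing bricks at $x'$ and at $x''$, two applications of Proposition~\ref{pro:M} project $e$ to a forcing edge of $H$ incident with $w$. That edge does not join $x'$ and $x''$, which contradicts Lemma~\ref{lem:bipartite}. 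Therefore $X=X'$, and $\overline X=X''$ up to the trivial case.

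\textbf{Step 3: the other contraction $G_2=G/X$ has $\mathcal P$.} Step 2 shows that $G_2$ is a brick. Vertices of $G_2$ other than the contraction vertex inherit forcing edges by Proposition~\ref{pro:M}, as in Step 1. For the contraction vertex $x$, we need a forcing edge of $G$ lying in $C$, or a forcing edge inside $X$ whose unique perfect matching in $G_1$ uses the corresponding edge. Either option yields, via Proposition~\ref{pro:M}, a forcing edge of $G_2$ at $x$. To obtain one, we use that the odd wheel $G_1$ has $\mathcal P$-witnesses at rim vertices. Take a rim neighbour of $\bar x$, or a rim vertex if $\bar x$ is the hub. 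Its forcing edge in $G$ is either a spoke of $G_1$, and then the unique perfect matching uses an edge at $\bar x$, so the corresponding edge of $G_2$ at $x$ is forcing by Proposition~\ref{pro:M}(i); or it lies in $C$, and Proposition~\ref{pro:M}(ii) applies.

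\textbf{Main obstacle.} I expect Step 2 to be the hardest. Projecting a forcing edge through two splicings requires checking that it stays forcing and that its endpoint survives in $H$. I would handle this by first contracting $\overline{X''}$ and then $X'$, each time invoking Proposition~\ref{pro:M}.
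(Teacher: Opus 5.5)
Your proposal is correct and follows the paper's proof essentially step for step. You use Theorem~\ref{lem:robustcut}; then Lemma~\ref{lem:bipartite} plus transfer of forcing edges to get $V(H)=\{x',x''\}$ (you apply Proposition~\ref{pro:M} twice where the paper lifts two perfect matchings of $H$ directly); then Proposition~\ref{pro:M} with Lemma~\ref{lem:oddwheel} for the solid side; and Proposition~\ref{pro:M} once more at the contraction vertex of the other side. Two small slips do not affect validity. First, a forcing edge may have parallel copies (spokes can, cf.\ Note~\ref{not:solid-multiple}), but forcing in $G_1$ still implies forcing in $\widetilde{G_1}$, since uniqueness of the perfect matching of $G_1-u-v$ passes to its underlying graph. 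Second, in Step~3 any forcing edge of $G$ in $E(G_1-\bar x)$ works, not only spokes (for instance rim edges when $\widetilde{G_1}\cong K_4$).
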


	\begin{proof}
Applying Theorem~\ref{lem:robustcut}, there exist  $X\subset V(G)$ and two vertex-disjoint  subsets
			$X'\subseteq X$ and $X''\subseteq\overline X$
		such that $G/\overline{X'}$ is a solid brick,
		$G/\overline{X''}$ is a brick, and the graph $H$ obtained from
		$G$ by contracting $X'$ and $X''$ to single vertices $x'$ and
		$x''$, respectively, is bipartite and matching covered.
		Let $W=G/(\overline{X'}\rightarrow{w})$ and
		$G'=G/(\overline{X''}\rightarrow{w'})$. {
		We claim that $V(H)=\{x',x''\}$. Otherwise,}  let
		$z\in V(H)\setminus\{x',x''\}$. Since $G$ has the property
		$\mathcal P$, there is a forcing edge $e$ of $G$ incident with
		$z$. Let $e_H$ be the edge of $H$ corresponding to $e$. Since
		$e_H$ is incident with $z\notin\{x',x''\}$, it does not join
		$x'$ and $x''$. {If $e_H$ is not forcing in $H$,} then two
		distinct perfect matchings of $H$ containing $e_H$ give
		two distinct perfect matchings of $G$ containing $e$, a
		contradiction. Hence $e_H$ is forcing in $H$, contrary to
		Lemma~\ref{lem:bipartite}. Therefore, $V(H)=\{x',x''\}$. Hence $X'=X$ and
		$X''=\overline X$. Consequently, with
		$C=\partial_G(X)$, the graphs $W$ and $G'$ are the two
		$C$-contractions of $G$, and
		$G\cong W(w)\odot G'(w')$.

			By Proposition~\ref{pro:M}, all vertices of $W$, except
			possibly the contraction vertex $w$, are incident with forcing
			edges of $W$. By Lemma~\ref{lem:oddwheel}, $W$ is an odd wheel
			up to multiple edges. Similarly,
			Proposition~\ref{pro:M} shows that every vertex of $G'-w'$ is
			incident with a forcing edge of $G'$.
			Choose $y\in V(W)\setminus\{w\}$ and a forcing edge $f$ of
			$G$ incident with $y$. Whether $f\in E(W-w)$ or $f\in \partial_G(X)$,
			Proposition~\ref{pro:M}(i) or (ii), respectively, yields a
			forcing edge of $G'$ incident with $w'$. Hence $G'$ has the
			property $\mathcal P$.
	\end{proof}

  \begin{proof}[\normalfont\bfseries
  	{Proof of Theorem~\ref{thm:intro-main}.}]
	The sufficiency follows by direct verification. We prove the necessity
	by strong induction on the number of robust cuts of $G$.
	If $G$ has no robust cut, then Lemma~\ref{cla:reduction} implies that
	$G$ is solid, and hence Lemma~\ref{lem:oddwheel} implies that $G$ is
	an odd wheel. Now suppose that $G$ has at least one robust cut. Then
	$G$ is nonsolid.
  		Apply Lemma~\ref{cla:reduction} and use the notation introduced
  		in its proof.
  	Then $G\cong W(w)\odot G'(w')$,
		where $W$ is an odd wheel up to multiple edges and $G'$ is a
		brick with the property $\mathcal P$.
	By Proposition~\ref{pro:underlying-simple}, $\widetilde{G'}$ is a brick with the property $\mathcal P$.
	Every robust cut of $\widetilde{G'}$ corresponds
	to a robust cut of $G$ distinct from $C$. Hence $\widetilde{G'}$ has fewer robust cuts than $G$. Hence the induction
  	hypothesis implies that it is either an odd wheel or isomorphic to
  	one of $\{\overline{C_6},\overline{C_6}^{+},H_8\}$.
  	If $\widetilde{W}$ is an odd wheel with at least six
  	vertices, then
  	Lemma~\ref{lem:rim vertex}(ii) or Lemma~\ref{lem:hub}(2),
  	according as $w$ is a rim vertex or the hub, gives a contradiction.
  	Hence $\widetilde{W}$ is isomorphic to $K_4$.
  	The same argument, with $W$ and $G'$ interchanged, shows that $\widetilde{G'}$ cannot be an odd wheel with at least six
  	vertices. Therefore,
  	it is isomorphic to one of
  	$\{K_4,\overline{C_6},\overline{C_6}^{+},H_8\}$.
  	Lemma~\ref{lem:k4-splicing} now implies that $G$ is isomorphic to
  	one of $\{\overline{C_6},\overline{C_6}^{+},H_8\}$.
  \end{proof}

    \begin{Not}\label{not:exceptional-multiple}
    	Let G be a brick with the property $\mathcal{P}$ and $\widetilde{G}\in \{\overline{C_6}, C_6^+, H_8\}$. Using the vertex labels
	for these three graphs shown in
	Figure~\ref{fig:T6}, the following statements hold:
    	\begin{enumerate}[(i)]
    		\item If $\widetilde G\cong\overline{C_6}$, then, after
    		relabeling the vertices if necessary, one of the following holds:
    		either every edge other than $xy$ and $x'z'$ has multiplicity
    		one, or every edge other than $yz$, $y'z'$, and $xx'$ has
    		multiplicity one.
    		\item If $\widetilde G\cong\overline{C_6}^{+}$, then every
    		edge outside $\{a_0a_2,b_1b_2,a_0b_1\}$ has multiplicity one.
    		\item If $\widetilde G\cong H_8$, then every edge other than
    		$s_1s_2$ has multiplicity one.
    	\end{enumerate}
    \end{Not}

    \section*{Acknowledgements}

   The authors gratefully acknowledge financial support from NSFC
   (No. 12271235), the Startup Research Fund
   for Introduced Talents of Sichuan Normal University
   (No. kyqd20260308), the Natural Science Foundation of Fujian Province
   (No. 2026J002034) and Sichuan Science and Technology Program
   (No. 2026NSFSC2032).


\begin{thebibliography}{99}\setlength{\itemsep}{0mm}\linespread{1.2}\selectfont

  \bibitem{Adams2004}
  P. Adams, M. Mahdian, E.S. Mahmoodian, On the forced matching numbers of bipartite graphs, Discrete Math. 281 (2004) 1-12.
  \bibitem{CLM2004}
  M.H. de Carvalho, C.L. Lucchesi, U.S.R. Murty, The perfect matching polytope and solid bricks, J. Combin. Theory Ser. B 92 (2004) 319-324.
  \bibitem{Carvalho2004}
M.H. de Carvalho, C.L. Lucchesi, U.S.R. Murty, Graphs with independent perfect matchings, J. Graph Theory 48 (2005) 19-50.
  \bibitem{Goedgebeur2026}
J. Goedgebeur, D. Mattiolo, G. Mazzuoccolo, J. Renders, I.H. Wolf, Cubic graphs with edges in exactly one perfect matching, J. Graph Theory 112 (2026) 276-289.
  \bibitem{Harary1991}
  F. Harary, D.J. Klein, T.P. \v{Z}ivkovi\'{c}, Graphical properties of polyhexes: perfect matching vector and forcing, J. Math. Chem. 6 (1991) 295-306.
  \bibitem{He2026}
  X. He, F. Lu, J. Xue, Wheel-like bricks and minimal matching covered graphs, J. Graph Theory 111 (2026) 5-16.
  \bibitem{Jiang2011}
  X. Jiang, H. Zhang, On forcing matching number of boron-nitrogen fullerene graphs, Discrete Appl. Math. 159 (2011) 1581-1586.
  \bibitem{Klein1987}
  D.J. Klein, M. Randi\'{c}, Innate degree of freedom of a graph, J. Comput. Chem. 8 (1987) 516-521.
  \bibitem{Lovasz1987}
L. Lov\'{a}sz, Matching structure and the matching lattice, J. Combin. Theory Ser. B 43 (1987) 187-222.

  \bibitem{Lovasz1986}
  L. Lov\'asz, M.D. Plummer, Matching Theory, Ann. Discrete Math. 29, Elsevier Science, Amsterdam, 1986.
  \bibitem{Lucchesi2024}
C.L. Lucchesi, U.S.R. Murty, Perfect Matchings: A Theory of Matching Covered Graphs, Springer, Cham, 2024.
\bibitem{Narayana2024}
D.V.V. Narayana, D. Mattiolo, K. Gohokar, N. Kothari,
CLM's dependence relation, solitary patterns and $r$-graphs,
arXiv:2409.00534 (2024).
  \bibitem{Ye2016}
  Y. Wu, D. Ye, C.-Q. Zhang, Uniquely forced perfect matching and unique 3-edge-coloring, Discrete Appl. Math. 215 (2016) 203-207.
  \bibitem{Zhang2025b}
  Y. Zhang, X. He, Q. Liu, H. Zhang, Forcing, anti-forcing, global forcing and complete forcing on perfect matchings of graphs: a survey, Discrete Appl. Math. 376 (2025) 318-347.
  \bibitem{Zhang2022}
  Y. Zhang, H. Zhang, Relations between global forcing number and maximum anti-forcing number of a graph, Discrete Appl. Math. 311 (2022) 85-96.

\end{thebibliography}
\end{document}